\documentclass[12pt]{amsart}
\usepackage[utf8]{inputenc}

\usepackage{amsmath, amsthm, amscd, amssymb, graphicx}
\usepackage{tikz}
\usepackage{scalefnt}
\newtheorem{thm}{Theorem}[section]
\newtheorem*{thm*}{Theorem}

\newtheorem*{cor*}{Corollary}
\newtheorem{lem}[thm]{Lemma}
\newtheorem{prop}[thm]{Proposition}
\newtheorem*{prop*}{Proposition}

\newtheorem*{con*}{Conjecture}

\theoremstyle{definition}
\newtheorem{defn}[thm]{Definition}
\newtheorem*{defn*}{Definition}

\theoremstyle{remark}

\newtheorem*{example*}{Example}

\newcommand{\R}{\mathbb{R}}
\newcommand{\Z}{\mathbb{Z}}
\newcommand{\link}{\mbox{lk}}
\newcommand{\prob}{\mathbb P}
\newcommand{\expect}{\mathbb E}
\newcommand{\eps}{\epsilon}

\begin{document}
\bibliographystyle{plain}

\title{$2$-complexes with large $2$-girth}

\bigskip

\author[Dotterrer]{Dominic Dotterrer}
\author[Guth]{Larry Guth}
\thanks{L.G.\  was supported in part by a Simons Investigator Grant.}
\author[Kahle]{Matthew Kahle}
\thanks{M.K.\ gratefully acknowledges support from DARPA \#N66001-12-1-4226, NSF \#CCF-1017182 and \#DMS-1352386, the Institute for Mathematics and its Applications, and the Alfred P.\ Sloan Foundation.}
\thanks{All three authors thank the Institute for Advanced Study for hosting them in Spring 2011, when some of this work was completed.}


\maketitle

\begin{abstract} The 2-girth of a 2-dimensional simplicial complex $X$ is the minimum size of a non-zero 2-cycle in $H_2(X, \Z/2)$.  We consider the maximum possible girth of a complex with $n$ vertices and $m$ 2-faces.  If $m = n^{2 + \alpha}$ for $\alpha < 1/2$, then we show that the 2-girth is at most $4 n^{2 - 2 \alpha}$ and we prove the existence of complexes with 2-girth at least $c_{\alpha, \eps} n^{2 - 2 \alpha - \eps}$.  On the other hand, if $\alpha > 1/2$, the 2-girth is at most $C_{\alpha}$.  So there is a phase transition as $\alpha$ passes 1/2.

Our results depend on a new upper bound for the number of combinatorial types of triangulated surfaces with $v$ vertices and $f$ faces.

\end{abstract}

\section{Introduction} 

In this paper, we study a 2-dimensional analogue of the girth of a graph.  Recall that the girth of a graph is the shortest length of a non-trivial cycle in the graph.  If $X$ is a 2-dimensional simplicial complex, then the {\it $2$-girth} of $X$ is defined to be the minimum number of 2-faces in a non-zero 2-cycle with coefficients in $\Z/2$.  We address the following question: if $X$ has $n$ vertices and $m$ 2-faces, what is the maximum possible size of the 2-girth of $X$?  We prove upper and lower bounds which match pretty closely.  They show a phase transition as $m$ passes $n^{2.5}$.

For context, let us first recall the situation for graphs.  Let $g(n,m)$ denote the maximum possible girth of a graph with $n$ vertices and $m$ edges.  We note that $g(n,n) = n$, because any graph with $n$ vertices and $n$ edges must contain a cycle, and the number of edges in the cycle is at most the total number of edges in the graph.  As we increase the number of edges, $g(n,m)$ decreases rapidly.  For instance, $g(n, 2n) = O(\log n)$, and for any $\alpha > 0$, $g(n, n^{1+\alpha}) \le C_{\alpha}$.  These estimates are special cases of the Moore bounds (cf.  \cite{alon2002moore} ).  

Now we turn to 2-dimensional simplicial complexes.  Let $g_2(n,m)$ denote the maximum possible 2-girth of a 2-dimensional simplicial complex $X$ with $n$ vertices and $m$ 2-faces.  A graph with $n$ vertices has at most $n \choose 2$ edges.  If $m = {n \choose 2} - n + 2$, then a dimension counting argument implies that $H_2(X, \Z/2) \not= 0$, and so $X$ contains a non-zero 2-cycle.  Therefore,

$$ g_2(n, {n \choose 2} - n+ 2) \le {n \choose 2} - n + 2 \le n^2. $$

\noindent As we add 2-faces to a simplicial complex $X$, its 2-girth can only decrease, and so $g_2(n,m)$ is decreasing in $m$.  In analogy with the case of graphs, one might expect $g_2(n, m)$ to decrease very sharply as $m$ increases.  For instance, we initially expected that $g_2(n, 2n^2) = O(\log n)$.  But it turns out that this is not the case.  Aronshtam, Linial, Luczak, and Meshulam introduced techniques in \cite{ALLM13} which show that $g_2(n, Kn^2) \ge c_K n^2$ for every constant $K >0$ -- see Section \ref{sec:comm} for more discussion of their work.

We study the behavior of $g_2(n,m)$ as $m$ increases further.   We prove two upper bounds for $g_2(n,m)$.  

\vskip10pt

{\bf \noindent Theorem \ref{thm:syst}.}
If $\alpha \ge 0$ and
$$m \ge n^{2 + \alpha},$$
then
$$g_2(n,m) \le 4 n^{2 - 2 \alpha}.$$\\

In the case $\alpha > 1/2$, there is a much better upper bound on $g_2(n,m)$. This was studied implicitly by S\'os, Erd\H{o}s, and Brown \cite{SEB73}, in the context of Tur\'an theory for hypergraphs. We include their argument for the sake of completeness, to show the following.\\

\vskip10pt

{\bf \noindent Theorem \ref{thm:syst2}.}
If $m \ge n^{2 + \alpha}$,
where $\alpha > 1/2$,
then
$$g_2(n, m) \le C_{\alpha},$$
where $C_{\alpha}$ is a constant which depends only on $\alpha$.\\

\bigskip

Our most difficult result is a lower bound on $g_2(n,m)$ that roughly matches the upper bound from Theorem \ref{thm:syst} in the regime $0 \le \alpha < 1/2$.  

\vskip10pt

{\bf \noindent Theorem \ref{thm:exist}.}
\label{thm:exist}
Let $0 < \alpha < 1/2$, and $\epsilon > 0$. For sufficiently large $n$, there exist simplicial complexes $\Delta$ with $n$ vertices and with at least $m = n^{2+\alpha}$ faces, and $2$-girth at least $n^{2-2\alpha-\epsilon}$.\\

\medskip

Notice that if $\alpha$ is slightly less than $1/2$, then $g_2(n, n^{2 + \alpha})$ is roughly $n$, while if $\alpha$ is slightly more than $1/2$, then $g_2(n, n^{2 + \alpha})$ is bounded by a constant.  Figure 1 compares the behavior of $g(n,m)$ and $g_2(n,m)$.  

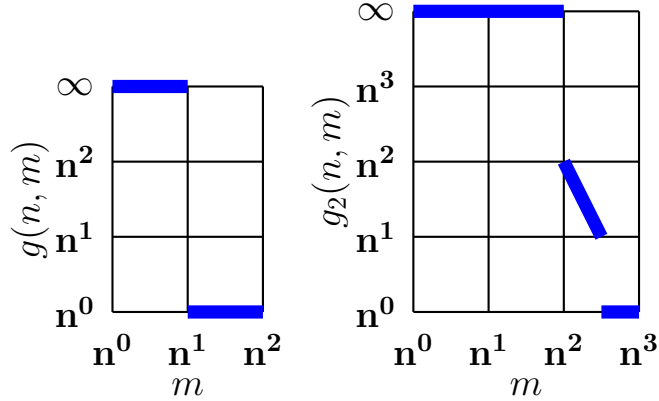
\begin{figure}
\centering
\begin{tikzpicture}[thick]
\large
\draw (1,1) -- (3,1);
\draw (1,2) -- (3,2);
\draw (1,3) -- (3,3);
\draw(1,4) -- (3,4);

\draw (1,1) -- (1,4);
\draw (2,1) -- (2,4);
\draw (3,1) -- (3,4);

\node at (1, 1/2) {$\bf n^0$};
\node at (2,1/2) {$\bf n^1$};
\node at (3,1/2) {$\bf n^2$};

\node at (1/2,1) {$\bf n^0$};
\node at (1/2,2) {$\bf n^1$};
\node at (1/2,3) {$\bf n^2$};
\node at (1/2,4) {$\bf \infty$};

\node at (2,0)  {$m$};
\node at (-0.1,2.5) {\rotatebox{90}{$g(n,m)$}};

\draw[blue, line width=5] (1,4) -- (2,4);
\draw[blue, line width=5] (2,1) -- (3,1);

\large
\draw (5,1) -- (8,1);
\draw (5,2) -- (8,2);
\draw (5,3) -- (8,3);
\draw(5,4) -- (8,4);
\draw(5,5) -- (8,5);

\draw (5,1) -- (5,5);
\draw (6,1) -- (6,5);
\draw (7,1) -- (7,5);
\draw (8,1) -- (8,5);

\node at (5, 1/2) { $\bf n^0$};
\node at (6,1/2) { $\bf n^1$};
\node at (7,1/2) { $\bf n^2$};
\node at (8,1/2) { $\bf n^3$};

\node at (9/2,1) {$\bf n^0$};
\node at (9/2,2) {$\bf n^1$};
\node at (9/2,3) { $\bf n^2$};
\node at (9/2,4) {$\bf n^3$};
\node at (9/2,5) {$\bf \infty$};

\node at (6.5,0) {$m$};
\node at (3.9,3) {\rotatebox{90}{$g_2(n,m)$}};

\draw[blue, line width=5] (5,5) -- (7,5);
\draw[blue, line width=5] (7,3) -- (7.5, 2);
\draw[blue, line width=5] (7.5,1) -- (8,1);

\end{tikzpicture}

\caption{Comparing the large-scale behavior of $g(n,m)$ and $g_2(n,m)$, on a log-log scale. }
\label{fig:asy}
\end{figure}

The proof of Theorem \ref{thm:exist} is based on random simplicial complexes.  Linial and Meshulam \cite{LM} introduced the following random model of simplicial complexes, called $Y(n,p)$.  Take the complete graph on $n$ vertices.  There are $n \choose 3$ possible 2-simplices that we could add to this graph.  Include each independently with probability $p$.  If $p = n^{\alpha - 1}$, then with high probability the resulting simplicial complex $Y$ has on the order of $n^{2 + \alpha}$ 2-faces.  We count inclusion-minimal 2-cycles in $Y$, and we show that with high probability $Y$ contains only a small number of inclusion-minimal 2-cycles of size less than $n^{2 - 2 \alpha - \eps}$.  By removing a 2-face from each of these cycles, we get a complex with 2-girth at least $n^{2 - 2 \alpha - \eps}$ and with roughly $n^{2 + \alpha}$ 2-faces.

Let $\Delta_n^{(2)}$ denote the 2-skeleton of the simplex on $n$ vertices.  We have $Y \subset \Delta_n^{(2)}$.  If $z$ is an inclusion-minimal 2-cycle in $\Delta_n^{(2)}$ with $f$ faces, then $z$ will be included in $Y \in Y(n,p)$ with probability $p^f$.  So to prove our main theorem, we have to count the number of inclusion minimal 2-cycles in $\Delta_n^{(2)}$.  This turns out to be a complicated counting problem.  We organize the inclusion minimal 2-cycles according to combinatorial type, which is defined as follows.  Given $z$ an inclusion minimal 2-cycle, there is a triangulated surface $\Sigma$ and a simplicial map $\Sigma \rightarrow \Delta_n^{(2)}$ realizing $z$.  Because $z$ is inclusion-minimal, $\Sigma$ is connected.  The combinatorial type of $z$ is the combinatorial type of the surface $\Sigma$.  It's not so hard to count the number of inclusion minimal 2-cycles of a fixed combinatorial type.  It turns out to be harder to count and organize the possible combinatorial types.  The most difficult step in the argument is an estimate about the set of possible combinatorial types.

\vskip10pt

{\bf \noindent Theorem \ref{thm:exist}.} Let $T(f,w)$ be the set of connected, triangulated closed surfaces with $f$ 2-faces and $w$ vertices, up to simplicial isomorphism.  For every $\delta > 0$, there is a constant $C_\delta$ so that

$$|T(f,w)| \le C_\delta^f f^{f/2} w^{-(1-\delta) w}. $$





%

Using this counting argument, we study the sizes of cycles in a random simplicial complex $Y$ chosen from $Y(n,p)$.  A similar method gives an estimate about the isoperimetric properties of $Y$:

\vskip10pt

{\bf \noindent Theorem \ref{thm:fill}}
Let $\alpha \in \left[ 0 , \frac{1}{2} \right)$ and $\epsilon > 0$. Suppose $p = n^{\alpha-1}$, so the expected number of faces in $Y(n,p)$ is $n^{2 + \alpha}$. Let $A$ be the filling area of the cycle $123$ in $Y$. Then with high probability
$$n^{2 - 2 \alpha - \epsilon} \le A \le n^{2 - 2 \alpha + \epsilon}.$$

\section{Notation and Concepts} \label{sec:notation}

\begin{defn}
Let $S = \{ 1, 2, \dots, n\}$. A {\it simplicial complex}, $\Delta$, on the underlying set $S$ is a collection of subsets $X \subseteq 2^S$ such that if $A \subseteq S$ is an element of $\Delta$ and $B \subset A$, then $B$ is an element of $\Delta$.\\

We refer to the elements of $\Delta$ of order $1$, $2$, and $2$ as {\it vertices}, {\it edges} and {\it faces}, respectively. We denote the set of vertices, edges and faces as $\Delta^{(0)}$, $\Delta^{(1)}$ and $\Delta^{(2)}$ respectively.\\

\end{defn}

\begin{defn}
If we set $$C_k \Delta : = \{ c: \Delta^{(k)} \to \mathbb{F} \},$$ we obtain a chain complex $$\cdots \to C_{k+1} \Delta \overset{\partial}{\to} C_k \Delta \overset{\partial}{\to} C_{k-1} \Delta \to \cdots$$

whose chain maps, $\partial = \partial_k : C_k \Delta \to C_{k-1} \Delta$ are defined by $$\partial c (\sigma) := \sum_{x \in S, \, \sigma \cup \{ x \} \in \Delta} c (\sigma \cap \{ x \}).$$ \\

We define $Z_k \Delta := \ker \partial_k$, the subspace of $k$-{\it cycles}, $B_{k-1} \Delta := {\rm im} \partial_k$, the subspace of $(k-1)$-{\it boundaries}, and $H_k (\Delta; \mathbb{F}) := Z_k \Delta / B_k \Delta,$ the $k$-th {\it homology} of $\Delta$.\\
\end{defn}

\begin{defn}
For $\mathbb{F} = \Z / 2 \Z$, we use the Hamming ``norm'': for $c \in C_k \Delta $, define $$\lVert c \rVert : = {\rm supp} (c) = \vert \{ \sigma \in \Delta^{(k)} \; \vert \; c (\sigma) = 1 \} \vert.$$

\end{defn}

\section{Upper bounds on $g_2(n,m)$} \label{sec:ineq}

\begin{thm}\label{thm:syst}
If $$m \ge n^{2 + \alpha},$$
where $0 < \alpha \le 1/2$, then
$$g_2(n,m) \le 4 n^{2 - 2 \alpha}.$$
\end{thm}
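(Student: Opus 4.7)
The plan is to find a small non-zero element of $\ker \partial_2$ by restricting attention to the 2-complex induced on a random set $S$ of roughly $n^{1-\alpha}$ vertices and then running a dimension-counting argument inside it.

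The key observation is the following: for any $S \subseteq V$ of size $s$, write $\Delta[S]$ for the induced subcomplex, $F(S)$ for its number of 2-faces and $E(S)$ for its number of edges. One always has the deterministic bound $E(S) \leq \binom{s}{2}$, while if $S$ is chosen uniformly at random among $s$-subsets of $V$, then $\expect[F(S)] = m \binom{s}{3}/\binom{n}{3}$. Choose $s$ just large enough that $m\binom{s}{3}/\binom{n}{3} > \binom{s}{2}$; this rearranges to $s > n(n-1)(n-2)/(2m) + 2$, and for $m \ge n^{2+\alpha}$ it is satisfied by some integer $s$ of order $n^{1-\alpha}$. The probabilistic method then produces an explicit $S$ with $F(S) > E(S)$. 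At this point I would invoke plain linear algebra over $\Z/2$: the $F(S)$ triangle-boundaries are vectors in a space of dimension at most $E(S)$, so any basis of their span together with any additional triangle yields a linearly dependent subset of size at most $E(S)+1$. A minimal such dependent subset is a non-zero 2-cycle supported on at most $E(S) + 1 \leq \binom{s}{2} + 1$ triangles. Substituting $s$ of order $n^{1-\alpha}$ gives a 2-cycle of size roughly $\tfrac12 n^{2-2\alpha}$, comfortably below $4n^{2-2\alpha}$.

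I do not anticipate any genuine obstacle. The proof is essentially a probabilistic pigeonhole (to locate a subcomplex with more 2-faces than edges) composed with a dimension count (to extract a minimum-support dependence among boundaries). The only care required is in pinning down the integer choice of $s$ so that both the strict inequality $\expect[F(S)] > \binom{s}{2}$ holds and the resulting bound $\binom{s}{2} + 1$ meets the explicit constant $4 n^{2-2\alpha}$ for all $\alpha \in (0, 1/2]$; this is a routine calculation rather than a genuine difficulty.
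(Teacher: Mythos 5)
Your proposal is correct and follows essentially the same route as the paper: sample a random induced subcomplex on $\Theta(n^{1-\alpha})$ vertices, observe that its expected number of $2$-faces exceeds its maximum possible number of edges, and then extract a small $2$-cycle by dimension counting over $\Z/2$. The paper packages the dimension count into two short lemmas (if $f_2 \ge v^2/2$ then $H_2 \neq 0$, and an inclusion-minimal $2$-cycle has $< v^2/2$ faces) and fixes $k = 2n^{1-\alpha}$, whereas you phrase it directly as a minimal linearly dependent subset of triangle boundaries and choose $s$ a bit tighter, but these are superficial differences.
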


So Theorem \ref{thm:syst} bounds the size of the 2-girth for $m$ up to $n^{5/2}$. For $m$ much larger than this, the 2-girth is bounded in size.

\begin{thm} \label{thm:syst2} [S\'os, Erd\H{o}s, and Brown] Let $\Delta$ be a $2$-dimensional simplicial complex with $n$ vertices and $m$ faces. If $m \ge n^{2 + \alpha}$,
where $1/2 < \alpha <1$,
then the 2-girth of $\Delta$ is bounded by $C_\alpha$ where $C_{\alpha}$ is a constant which depends only on $\alpha$.
\end{thm}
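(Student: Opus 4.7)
The plan is to follow a classical double-counting / supersaturation argument in the spirit of Kővári--Sós--Turán, extracting from the density hypothesis a small auxiliary graph that is forced to contain a short cycle, and then bipyramiding over that short cycle to produce a bounded-size $2$-cycle in $\Delta$. For each pair $p = \{a,b\}$ of vertices let
$$N_p \,:=\, \bigl\{c \in V : \{a,b,c\} \in \Delta^{(2)}\bigr\}$$
be the link of $p$. Since each $2$-face contributes to three links, $\sum_p |N_p| = 3m \ge 3 n^{2+\alpha}$. Applying Cauchy--Schwarz (or convexity of $\binom{\cdot}{2}$) over the $\binom{n}{2}$ pairs $p$ yields
$$\sum_p \binom{|N_p|}{2} \,\ge\, \Omega\!\left(n^{2 + 2\alpha}\right).$$
Rewriting the left-hand side as $\sum_{q = \{c,d\}} \bigl|\{p : \{c,d\} \subseteq N_p\}\bigr|$ and averaging over the $\binom{n}{2}$ choices of $q$ produces a pair $\{c,d\}$ whose ``co-codegree set'' $S := \{p : \{c,d\} \subseteq N_p\}$ has size $|S| \ge \Omega(n^{2\alpha})$.

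Next, I would view $S$ as an ordinary graph $G$ on the vertex set $V \setminus \{c,d\}$ with $\Omega(n^{2\alpha})$ edges among $n - 2$ vertices. The hypothesis $\alpha > 1/2$ is used precisely here: the average degree $\Omega(n^{2\alpha - 1})$ grows with $n$, so after greedily pruning low-degree vertices one obtains a subgraph $G'$ with minimum degree $\delta \ge c\, n^{2\alpha - 1}$. A standard BFS / Moore-bound argument then produces a cycle in $G'$ of length
$$k \,\le\, 2 \log_\delta(n) + O(1) \,\le\, \tfrac{2}{2\alpha - 1} + O(1) \,=:\, L_\alpha,$$
a constant depending only on $\alpha$.

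To finish, I would bipyramid over this short cycle. Write the cycle as $a_1 a_2 \cdots a_k a_1$. By construction every edge $\{a_i,a_{i+1}\}$ lies in $S$, so the $2k$ triangles $\{a_i,a_{i+1},c\}$ and $\{a_i,a_{i+1},d\}$ are all $2$-faces of $\Delta$. Set
$$z \,:=\, \sum_{i=1}^{k}\bigl(\{a_i,a_{i+1},c\} + \{a_i,a_{i+1},d\}\bigr) \;\in\; C_2(\Delta;\Z/2).$$
An edge-by-edge check shows $\partial z = 0$: each edge $\{a_i,a_{i+1}\}$ acquires multiplicity $2$ (from the $c$- and $d$-apex), each $\{a_i,c\}$ and $\{a_i,d\}$ also acquires multiplicity $2$ (from the two triangles of the cycle incident to $a_i$), while $\{c,d\}$ never appears. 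Hence $z$ is a nonzero $2$-cycle of weight $\|z\| = 2k \le 2 L_\alpha$, proving $g_2(n,m) \le 2L_\alpha =: C_\alpha$.

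The whole argument is essentially bookkeeping, and I do not anticipate any real obstacle; the only substantive input is the Moore bound. The key conceptual point is that $\alpha > 1/2$ is exactly the threshold at which the average codegree over pairs of pairs exceeds $n$, so the auxiliary graph $G$ is dense enough to force a bounded-length cycle, and hence $\Delta$ is forced to contain a bounded-size bipyramidal $2$-cycle.
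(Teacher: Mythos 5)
Your proposal is correct and follows essentially the same argument as the paper: double-count to locate a pair of vertices $c,d$ whose common link graph $\link(c)\cap\link(d)$ has $\Omega(n^{2\alpha})$ edges, invoke the Moore bound (the paper's Lemma~\ref{lem:moore}) to find a cycle of bounded length in that graph, and suspend over it with apexes $c$ and $d$ to produce a bounded-size $2$-cycle. Incidentally, your exponent bookkeeping ($P=\Omega(n^{2+2\alpha})$, $|S|=\Omega(n^{2\alpha})$) is cleaner than the paper's, which contains a few typographical slips in the exponents (e.g.\ $m\ge n^{5/2+\alpha}$ and $P\ge n^{3+2\alpha}$) that do not affect the substance.
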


Theorem \ref{thm:syst2} is a result of S\'os, Erd\H{o}s, and Brown \cite{SEB73}, but we include a proof here for the sake of completeness.

\subsection{Proof of Theorem \ref{thm:syst}}

\begin{lem} \label{lem:dim}
Let $\Delta$ be a $2$-dimensional complex with $n$ vertices and $m$ faces. If $m \geq n^2/2$, then $H_2 (\Delta) \neq 0$.
\end{lem}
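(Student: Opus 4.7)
The plan is to use a straightforward dimension count on the $\mathbb{Z}/2$ chain complex $C_2(\Delta) \xrightarrow{\partial_2} C_1(\Delta) \xrightarrow{\partial_1} C_0(\Delta)$. Since $\Delta$ is $2$-dimensional there is no $C_3$, so $H_2(\Delta) = Z_2(\Delta) = \ker \partial_2$. Thus it suffices to show that $\partial_2 \colon C_2(\Delta) \to C_1(\Delta)$ fails to be injective under the hypothesis $m \geq n^2/2$.

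The observation is simply that the target has small dimension. We have $\dim C_2(\Delta) = m$ and $\dim C_1(\Delta) = |\Delta^{(1)}| \leq \binom{n}{2} = \frac{n(n-1)}{2} < \frac{n^2}{2}$. Combining with the hypothesis,
\begin{equation*}
\mathrm{rank}(\partial_2) \;\leq\; \dim C_1(\Delta) \;\leq\; \binom{n}{2} \;<\; \frac{n^2}{2} \;\leq\; m \;=\; \dim C_2(\Delta),
\end{equation*}
so by rank-nullity $\dim \ker \partial_2 \geq m - \binom{n}{2} > 0$. Any nonzero element of $\ker \partial_2$ is a nonzero class in $H_2(\Delta;\mathbb{Z}/2)$, completing the argument.

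There is no real obstacle here; the only ``choice'' is whether to sharpen the bound using $\mathrm{image}(\partial_2) \subseteq Z_1(\Delta)$ together with $\dim Z_1(\Delta) = |\Delta^{(1)}| - \mathrm{rank}(\partial_1) \leq \binom{n}{2} - (n-1)$ (if we wanted a cleaner version via Euler characteristic), but for the stated threshold $m \geq n^2/2$ the crude inequality $|\Delta^{(1)}| \leq \binom{n}{2} < n^2/2$ is already enough. I would state the crude version for simplicity, since this lemma is only a stepping stone for Theorem~\ref{thm:syst}.
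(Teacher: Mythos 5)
Your proof is correct and is essentially the same dimension-counting argument the paper gives: both compare $\dim C_2 = m$ to $\dim C_1 \le \binom{n}{2} < n^2/2$ and conclude $\ker\partial_2 \neq 0$, hence $H_2 \neq 0$ since $\Delta$ has no $3$-cells. You have merely written out the rank-nullity step that the paper leaves implicit.
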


\begin{proof}
Since $\Delta$ is a simplicial complex, the number of edges of $\Delta$ is at most ${n \choose 2} < n^2 / 2$. If $m \geq \frac{n^2}{2}$, then the boundary map in simplicial homology from $2$-chains to $1$-chains has a nontrivial cycle, in which case $H_2 (\Delta) \neq 0$.
\end{proof}

We will restrict our attention to $\Z / 2 \Z$-cycles, and we will restrict ourselves to the ones which are irreducible in the following sense:

\begin{defn}
A $\Z / 2 \Z$-cycle, $z \in Z_2 (\Delta)$ is called {\it inclusion-minimal} if for any other cycle $z' \in Z_2 ( \Delta)$, ${\rm supp} (z') \subseteq {\rm supp} (z)$ implies that $z' = z$.
\end{defn}

\begin{lem}\label{lem:small}
For a $2$-dimensional complex on $n$ vertices, every inclusion-minimal $2$-cycle has fewer than $n^2/2$ faces.
\end{lem}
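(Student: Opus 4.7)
The strategy is to restrict to the subcomplex cut out by the cycle's support and then apply the dimension count of Lemma~\ref{lem:dim} again, but sharpened so that we end up with a cycle strictly smaller than $z$.

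First, given an inclusion-minimal $2$-cycle $z$, I would let $\Delta' \subseteq \Delta$ be the subcomplex whose $2$-faces are exactly those in $\mathrm{supp}(z)$, together with the edges and vertices of those faces. By construction $z \in Z_2(\Delta')$, every element of $Z_2(\Delta')$ has support contained in $\mathrm{supp}(z)$, and $\Delta'$ has at most $n$ vertices, hence at most $\binom{n}{2}$ edges.

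Next, suppose for contradiction that $\lVert z \rVert \geq n^2/2$. The boundary map $\partial_2 \colon C_2(\Delta') \to C_1(\Delta')$ has source of dimension $\lVert z \rVert$ and target of dimension at most $\binom{n}{2}$, so
$$\dim Z_2(\Delta') \;\geq\; \lVert z \rVert - \binom{n}{2} \;\geq\; \frac{n^2}{2} - \frac{n(n-1)}{2} \;=\; \frac{n}{2}.$$
For $n \geq 4$ this is at least $2$, so I can choose some $z' \in Z_2(\Delta')$ with $z' \notin \{0,z\}$. The key observation that closes the argument is that over $\Z/2$ a chain is uniquely determined by its support, so $z' \neq z$ together with $\mathrm{supp}(z') \subseteq \mathrm{supp}(z)$ forces the inclusion $\mathrm{supp}(z') \subsetneq \mathrm{supp}(z)$, contradicting inclusion-minimality of $z$.

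Finally, the cases $n \leq 3$ are vacuous, since there is no non-zero $2$-cycle on so few vertices (a closed triangulated surface has at least $4$ vertices). I do not expect any genuine obstacle: the only mildly subtle point is that we need $\dim Z_2(\Delta') \geq 2$ rather than merely $\geq 1$ to produce a cycle different from $z$, which is precisely why we want the stronger lower bound $n/2$ coming from the dimension count, rather than a naive application of Lemma~\ref{lem:dim} itself.
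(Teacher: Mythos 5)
Your argument is correct and uses essentially the same dimension-count on the subcomplex spanned by $\mathrm{supp}(z)$ that the paper uses; the paper phrases it by deleting a face and reapplying Lemma~\ref{lem:dim}, while you directly bound $\dim Z_2(\Delta') \geq \lVert z \rVert - \binom{n}{2} \geq n/2 \geq 2$ to produce a second nonzero cycle supported inside $\mathrm{supp}(z)$. Your version is marginally sharper in that it cleanly covers the borderline case $\lVert z \rVert = n^2/2$, which the paper's wording (``more than $n^2/2$'') technically skips, but the underlying idea is the same.
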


\begin{proof}
If there are more than $n^2 / 2$ faces in a cycle, then delete faces arbitrarily until no more than $n^2 / 2$ faces remain. By Lemma \ref{lem:dim}, there still remains at least one cycle. So an inclusion-minimal cycle can not have more than $n^2 / 2$ faces.
\end{proof}

%
%
%
%

\begin{proof}[Proof of Theorem \ref{thm:syst}]

Let $D$ be a random subcomplex of $\Delta$, chosen as follows. Let $V$ be a set of exactly $k$ vertices, chosen uniformly over all $n \choose k$ such subsets, and let $D= D(V)$ be the induced subcomplex of $\Delta$ on $V$.

So $D$ is a simplicial complex with $k$ vertices. What is the expected number of $2$-faces $\expect [ f_2(D)]$? For a $2$-face $T = \{ x, y , z \}$ in $\Delta$, $T$ is a face in $D$ if and only if $x, y, z \in V$. We have
\begin{align*}
\prob [ x, y , z \in V ] &= \frac{{n -3 \choose k-3}}{{n \choose k}},\\
& \ge \frac{k^3}{2n^3},
\end{align*}
as long as $n \ge k \ge 6$.

Now,
\begin{align*}
\expect [ f_2(D)] & \ge \frac{k^3}{2n^3} f_2(\Delta)\\
& \ge \frac{k^3}{2n^3} n^{2+ \alpha}\\
\end{align*}

Setting $k =2 n^{1 - \alpha}$,
\begin{align*}
\expect [ f_2(D)] & \ge \frac{8 n^{3 - 3 \alpha}}{2n^3} n^{2+ \alpha}\\
& = 4 n^{2 - 2 \alpha}\\
& = k^2.
\end{align*}

Since this is the average, there must exist some subcomplex $D'$ on  $k$ vertices, such that $f_2(D') \ge k^2$.
Applying Lemma \ref{lem:dim}, we have that $H_2(D') \neq 0$. Applying Lemma \ref{lem:small}, $D'$ contains a non-trivial 2-cycle with at most $k^2$ faces.  Since $k = 2 n^{1 - \alpha}$, we get the desired bound.
\end{proof}

\subsection{Proof of Theorem \ref{thm:syst2}}

\begin{lem} \label{lem:moore}
If $H$ is a graph on fewer than $n$ vertices, and with at least $n^{1 + \beta}$ edges, with $\beta > 0$ then $H$ contains a cycle of length at most $$C_{\beta}= 2 \left\lceil \frac{1}{\beta} \right\rceil + 1.$$
\end{lem}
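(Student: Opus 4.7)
The plan is to reduce to the classical Moore bound by first passing to a subgraph of high minimum degree and then running breadth-first search from a single vertex. Since $H$ has more than $n^{1+\beta}$ edges on fewer than $n$ vertices, its average degree is strictly greater than $2n^\beta$. By the classical fact that any graph of average degree $\bar d$ contains a subgraph of minimum degree at least $\bar d / 2$ (iteratively discard any vertex whose current degree is below $\bar d /2$; the average never decreases, so the process terminates at a non-empty subgraph), $H$ contains a non-empty subgraph $H' \subseteq H$ with $\delta(H') \geq n^\beta$.

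Next, set $k := \lceil 1/\beta \rceil$, so that $C_\beta = 2k+1$. Fix any vertex $v$ of $H'$ and run BFS from $v$. Suppose for contradiction that $H$, and hence $H'$, contains no cycle of length at most $2k+1$. The standard observation is that any edge of $H'$ joining two vertices at BFS-depth at most $k$ that is not a tree edge would produce, via their closest common ancestor, a cycle of length at most $2k+1$; this is excluded. Consequently, each vertex at BFS-depth $i < k$ contributes at least $\delta(H')-1 \geq n^\beta - 1$ fresh children at depth $i+1$, so the BFS ball of radius $k$ about $v$ contains at least $(n^\beta - 1)^k$ vertices.

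Finally, because $k\beta \geq 1$, one has $(n^\beta - 1)^k \geq n$ for all $n$ large enough in terms of $\beta$, contradicting $|V(H')| \leq |V(H)| < n$. For small $n$ the hypothesis $|E(H)| \geq n^{1+\beta}$ on fewer than $n$ vertices eventually becomes vacuous (the maximum is $\binom{n-1}{2}$), so the statement is trivial in that range. The proof is essentially routine; the only bit of care is the arithmetic verifying $(n^\beta - 1)^k \geq n$, which is the reason for choosing $k = \lceil 1/\beta \rceil$ and for forcing the minimum degree to be strictly above the threshold $n^\beta$.
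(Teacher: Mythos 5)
Your approach is essentially the same as the paper's: pass to a subgraph $H'$ of minimum degree at least $n^\beta$ using the standard average-degree-to-min-degree reduction, then run a BFS/Moore-bound count to show that girth $> 2\lceil 1/\beta\rceil+1$ would force $H'$ to have at least $n$ vertices. (The paper's proof actually trails off after displaying the BFS sum; yours carries the count to its conclusion.) One small imprecision: your claim that for small $n$ the hypothesis ``eventually becomes vacuous'' is not quite right — there is an intermediate range of $n$ (depending on $\beta$) where $n^{1+\beta}\le\binom{n-1}{2}$ holds, so the hypothesis is satisfiable, yet $(n^\beta-1)^{\lceil 1/\beta\rceil}<n$; the lemma as stated is really an asymptotic statement (which is how the paper applies it), and a careful write-up should either add a ``for $n$ sufficiently large'' hypothesis or enlarge $C_\beta$ slightly.
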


\begin{proof}[Proof of Lemma \ref{lem:moore}]
A graph $H$ of average degree $d$ contains a subgraph $H'$ of minimum degree $d' \ge d/2$. (See for example Proposition 1.2.2 in \cite{Diestel10}.) Here $$d= 2n^{1+\beta} / n = 2n^{\beta}$$ and we restrict our attention to a subgraph $H' \subseteq H$, which has minimum degree at least $n^{\beta}$.

Let $v \in H'$ be a vertex, and for $k=0,1,2, \dots$ let $N_v(k)$ denote the number of vertices in $H'$ at graph distance $k$ from $v$. By the minimum degree requirement, $N_v(1) \ge n^{\beta}$.  If any of the distance-$1$ vertices have an edge between them, we have $g(H) \le g(H') \le 3$. 

 If any two of the distance-$1$ vertices have a common neighbor, $g(H) \le 4$. Similarly, if any two of the distance-$2$ vertices are adjacent, then $g(H) \le 5$, and so on.  Now, every vertex at distance $d$ must have at least $n^{beta}-1$ neighbors at distance $d+1$.
 So up to distance $d$, we have at least
 $$ 1 + n^{\beta} +  n^{\beta} \left(  n^{\beta}  -  1  \right) +  n^{\beta} \left(  n^{\beta}  -  1  \right)^2 + \dots +  n^{\beta} \left(  n^{\beta}  -  1  \right)^{d-1}$$
\end{proof}

\begin{proof}[Proof of Theorem \ref{thm:syst2}]
Define the degree $\deg(e)$ of an edge $e$ to be number of $2$-dimensional faces containing it. Then
$$\sum_e \deg(e) = 3 m$$
and $m \ge n^{5/2 + \alpha}$
by assumption. Note that the average edge degree is then at least $\lfloor 6 n^{1/2 + \alpha} \rfloor$.

Let $P$ denote the number of pairs of $2$-dimensional faces in $\Delta$ which share an edge. Then
$$P = \sum_e {\deg(e) \choose 2}.$$

Note that if $a+2 \le b$, then
$${a+1 \choose 2} + {b-1 \choose 2} \le { a \choose 2} + {b \choose 2},$$
so the sum $\sum_e {\deg(e) \choose 2}$ is minimized when the parts are as close together as possible.

Then even if each edge $e$ had degree
$\lfloor 6 n^{1/2 + \alpha} \rfloor$, we still have
\begin{align*}
P &\ge {n \choose 2}{\lfloor 6 n^{1/2 + \alpha} \rfloor \choose 2}\\
& \ge n^{3 + 2 \alpha}
\end{align*}
for large enough $n$.

Now for every pair of vertices $a,b$, let $T(a,b)$ denote the graph $\link(a) \cap \link(b)$. I.e.\ the vertices in $T(a,b)$ correspond to the intersection of the neighborhoods of $a$ and $b$, and the edges $xy$ in $T(a,b)$ correspond to pairs of triangles $axy$, $bxy$.

If we sum up the number of edges in $T(a,b)$ over all ${ n \choose 2}$ pairs $a,b$, this gives $P$. So by the pigeonhole principle, there must be at least one pair $a,b$ such that $T(a,b)$ has at least $n^{1 + 2 \alpha}$ edges. Then applying Lemma \ref{lem:moore}, this graph must contain a cycle $C$ on at most $C_{alpha}$ edges.

In the simplicial complex $\Delta$ then, there is a suspension over $C$ with suspension points $a$ and $b$. So in particular there exists a $2$-cycle with at most $2 C_{\alpha}$ $2$-faces.

\end{proof}

\section{Complexes with large girth}

\subsection{Overview} \label{sec:largegirth}

We prove the existence of simplicial complexes with large girth.

\begin{thm} \label{thm:exist}
Let $0 < \alpha < 1/2$, and $\epsilon > 0$. For sufficiently large $n$, there exist simplicial complexes $\Delta$ with $n$ vertices and with at least $m=n^{2 + \alpha}$ faces, and $2$-girth at least $n^{2 - 2 \alpha - \epsilon}$.
\end{thm}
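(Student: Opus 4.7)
The plan is to take the Linial--Meshulam random complex $Y \sim Y(n,p)$ with $p = n^{\alpha-1}$ and then delete one 2-face from every small inclusion-minimal 2-cycle. A standard Chernoff bound gives $f_2(Y) \ge c\, n^{2+\alpha}$ with high probability, and after adjusting $p$ by a constant one may assume at least $n^{2+\alpha}$ faces. The key claim is that, with high probability, the number of inclusion-minimal 2-cycles in $Y$ of size less than $F := n^{2-2\alpha-\eps}$ is $o(n^{2+\alpha})$. Given this, the deletion procedure yields a complex $\Delta$ with $\ge n^{2+\alpha}$ faces and \emph{no} 2-cycle of size less than $F$ at all, because every 2-cycle contains an inclusion-minimal sub-cycle of no greater size.

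To bound the expected number of small inclusion-minimal 2-cycles, I would organize them by combinatorial type. Any inclusion-minimal 2-cycle in $\Delta_n^{(2)}$ is the image of a simplicial map $\phi : \Sigma \to \Delta_n^{(2)}$ from a connected triangulated closed surface $\Sigma$, so the cycles of size $f$ are enumerated by choosing $\Sigma \in T(f,w)$ and a vertex map $\Sigma^{(0)} \to [n]$ (at most $n^w$ choices). Since a given such cycle lies in $Y$ with probability exactly $p^f$,
$$ \expect\bigl[\#\text{ inclusion-min. 2-cycles of size }f\bigr] \ \le\ \sum_{w} |T(f,w)|\, n^w\, p^f.$$
Applying the main counting bound $|T(f,w)| \le C_\delta^f f^{f/2} w^{-(1-\delta)w}$ together with the Euler identity $w = f/2 + \chi \le f/2 + 2$ for closed triangulated surfaces, the dominant contribution near $w \approx f/2$ produces a factor of the shape $(C_\delta')^f f^{\delta f/2}$, a dramatic saving over the naive $f^{f/2}$. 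Combined with $p^f = n^{(\alpha-1)f}$ and $n^w \le n^{f/2 + 2}$, this gives
$$ \expect\bigl[\#\text{ cycles of size }f\bigr] \ \le\ n^{2}\cdot (C_\delta')^f \cdot f^{\delta f/2} \cdot n^{f(\alpha - 1/2)}.$$

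For $f \le F$ one has $f^{\delta f/2} \le n^{(\delta/2)(2-2\alpha-\eps) f}$, so the exponent of $n$ per unit $f$ in the bound above is $(\delta/2)(2-2\alpha-\eps) + \alpha - 1/2$. Since $\alpha < 1/2$, choosing $\delta$ small makes this strictly negative, so for $n$ large the sum over $f \le F$ is dominated by a geometric series and is $O(n^2) = o(n^{2+\alpha})$. A Markov / first-moment argument then delivers the desired high-probability bound and completes the proof. The main obstacle is really the counting theorem $|T(f,w)| \le C_\delta^f f^{f/2} w^{-(1-\delta)w}$ itself: it is the ingredient that prevents the combinatorial explosion of surface types from overwhelming the $p^f$ suppression and allows the $f$-sum to close. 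Small $f$ below some absolute constant needs separate attention since the asymptotic bound may be vacuous there, but only finitely many topological types arise and their contribution is absorbed into constants.
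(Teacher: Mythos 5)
Your overall approach is exactly the paper's: take $Y \sim Y(n,p)$ with $p = n^{\alpha-1}$, upper-bound the number of inclusion-minimal $2$-cycles of size below $F = n^{2-2\alpha-\eps}$ by enumerating combinatorial types of triangulated surfaces, and delete one face from each such cycle. The reduction of the cycle count to $|T(f,w)|$ and the role played by Theorem~\ref{triangcount} are likewise identified correctly.

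However, there is a genuine gap in the central estimate. You assert that in
$$ \sum_{w} |T(f,w)|\, n^w\, p^f \le C_\delta^f f^{f/2} p^f \sum_w w^{-(1-\delta)w} n^w $$
the sum over $w$ is ``dominated near $w \approx f/2$,'' and from this you extract the bound $n^2 (C_\delta')^f f^{\delta f/2} n^{(\alpha-1/2)f}$. But the summand $w^{-(1-\delta)w} n^w$ is maximized at $w^* = n^{1/(1-\delta)}/e \approx n/e$ (take the logarithmic derivative), not at the top of the range $w \le f/2+2$. Since you need to sum over $f$ up to $F = n^{2-2\alpha-\eps}$, which is much larger than $n$ when $\alpha < 1/2$, for such $f$ the maximum over the admissible $w$-range $[\sqrt{3f}, f/2+2]$ lies at $w^*$, not at $f/2+2$, and the value there is $e^{(1-\delta)w^*}$, an extra factor of order $\exp(n/e)$ that your displayed inequality does not account for. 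One can check concretely (e.g.\ with $\alpha=0.1$, $f = n^{1.7}$) that the single term at $w \approx n/e$ already exceeds your claimed right-hand side by a factor $n^{\Theta(n^{1.7})}$, so the inequality as stated is false. Your final conclusion (the sum over $f \le F$ does tend to zero) survives a corrected analysis, because for $f \gtrsim n$ the factor $f^{f/2}p^f \le n^{-\eps f/2}$ overwhelms the $\exp(O(n))$ term; but that is a separate case analysis that your write-up does not do.

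The paper avoids this entirely by not using the crude count $n^w$ of vertex maps. It parametrizes cycles by the size $v$ of the image vertex set and bounds $Z(f,v) \le \sum_{w\ge v} |T(f,w)|\, v^w$; since $w \ge v$, the ratio $(v/w^{1-\delta})^w \le v^{\delta w}$ collapses cleanly, giving $Z(f,v) \le C_\delta^f f^{f/2} v^{\delta f}$ with no $w$-phase-transition to worry about. The $n$-dependence is then carried separately by ${n \choose v}$, and the argument splits into intermediate ($f \le n$, where ${n\choose v} \le (en/f)^{f/2+2}$) and large ($n \le f \le F$, where ${n \choose v} \le 2^n$) regimes to complete the sum. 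If you want to stick with $n^w$, you would need to add the analogous split at $f \approx n$ and control the $\exp(O(n))$ contribution explicitly; as written the estimate is incorrect.
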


Our strategy is to start with a random $2$-dimensional simplicial complex.  We recall the Linial-Meshulam model of random simplicial complexes $Y(n,p)$ from \cite{LM}.  A complex $Y$ in $Y(n,p)$ has $n$ vertices, and $n \choose 2$ edges, and each $2$-face is included independently with probability $p$.  We choose $p = n^{-1 + \alpha}$, where $0 < \alpha < 1/2$ is as in Theorem \ref{thm:exist}.

We say that an event occurs {\it with high probability (w.h.p.)} if the probability approaches one as $n \to \infty$.  We will show that with high probability, a complex $Y$ in $Y(n,p)$ contains only a few small 2-cycles.  We construct the complex $\Delta$ in Theorem \ref{thm:exist} by starting with a random 2-dimensional complex $Y$ from $Y(n,p)$, and then removing a small number of 2-faces to destroy all the small 2-dimensional cycles.  

Let $\Delta_n^{(2)}$ denote the 2-skeleton of the simplex on $n$ vertices so that $Y \subset \Delta_n^{(2)}$.  If $z$ is a 2-cycle in $\Delta_n^{(2)}$ with $f$ 2-faces, then $z$ is included in $Y$ with probability $p^f$.  To estimate the number of small 2-cycles in $Y$, we have to organize the set of 2-cycles in $\Delta_n^{(2)}$.  It turns out to be a good idea to study inclusion-minimal 2-cycles.  We say that a 2-cycle $z$ is inclusion minimal if there is no 2-cycle supported on a proper subset of the 2-faces in the support of $z$.  

Let $Z(f,v)$ denote the number of inclusion-minimal $2$-cycles in $\Delta_n^{(2)}$ with vertex support $[v] =\{ 1, 2, \dots, v \}$, and with exactly $f$ faces.  

\begin{lem} \label{lem:range} $Z(f,v) = 0$ unless
$$ \sqrt{2f} \le v \le f/2 + 2.$$
\end{lem}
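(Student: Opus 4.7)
The plan is to exploit the correspondence (stated in the introduction) between inclusion-minimal 2-cycles and triangulated closed surfaces: every inclusion-minimal 2-cycle $z$ in $\Delta_n^{(2)}$ is the image of a connected triangulated closed surface $\Sigma$ under a simplicial map $\phi : \Sigma \to \Delta_n^{(2)}$. I take $\phi$ injective on 2-faces, so that $\Sigma$ has exactly $f$ 2-faces, $3f/2$ edges (each edge of a closed surface lies in exactly two faces), and some number $v' \ge v$ of vertices, where $v$ is the vertex support of $z$.

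For the upper bound $v \le f/2 + 2$, apply Euler's formula to $\Sigma$:
$$ \chi(\Sigma) \;=\; v' - 3f/2 + f \;=\; v' - f/2. $$
Since any closed surface satisfies $\chi(\Sigma) \le 2$ (with equality for $S^2$), we get $v' \le f/2 + 2$, and then $v \le v' \le f/2 + 2$, since $\phi$ sends $V(\Sigma)$ onto the vertex support of $z$.

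For the lower bound $\sqrt{2f} \le v$, I count face--pair incidences. The $f$ faces of $z$ are distinct 3-subsets of $[v]$, so they contain $3f$ pairs of vertices in total. The cycle condition forces each pair to appear in an even number of faces, and for an inclusion-minimal cycle one argues that this multiplicity is exactly $2$: any edge of multiplicity $2k \ge 4$ would admit a re-pairing of the $k$ preimage edges in $\Sigma$ mapping to it, producing an alternative surface realization $\phi' : \Sigma' \to \Delta_n^{(2)}$ of the same cycle $z$; some such re-pairing yields a disconnected $\Sigma'$, and the corresponding decomposition of $z$ into proper sub-cycles contradicts inclusion-minimality. Granting the multiplicity-$2$ property, the $3f$ incidences are distributed over exactly $3f/2$ distinct edges, each of which is a pair in $[v]$. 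Hence $3f/2 \le \binom{v}{2} = v(v-1)/2$, so $3f \le v(v-1) < v^2$, and $v > \sqrt{3f} \ge \sqrt{2f}$.

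The principal obstacle is rigorously establishing the multiplicity-two claim. It reduces to a cut-and-re-glue argument on $\Sigma$: given any image edge $\bar e$ with multiplicity $\ge 4$, one exhibits a pairing of its preimage edges in $\Sigma$ under which the resulting surface falls into $\ge 2$ connected components. This combinatorial-topological step is where the hypothesis of inclusion-minimality enters in an essential way.
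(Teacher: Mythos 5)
Your upper bound argument is sound and essentially parallels the paper's: the paper applies the Euler relation $v-e+f = 2-\beta_1$ to the support complex of $z$ together with $2e\le 3f$, while you apply Euler to the resolved surface $\Sigma$ and use $\chi(\Sigma)\le 2$; these are the same computation in different clothing, and both are correct.

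The lower bound, however, has a genuine gap: the multiplicity-two claim is false. It is not true that every edge of an inclusion-minimal $2$-cycle lies in exactly two faces, and the cut-and-re-glue sketch does not repair this, because cutting $\Sigma$ along the $k$ preimage edges of $\bar e$ can leave a \emph{connected} surface-with-boundary, in which case \emph{no} re-pairing disconnects $\Sigma'$. Concretely: take the icosahedron $\Sigma_0 \cong S^2$, choose an antipodal pair of vertices $p,q$, a neighbor $a$ of $p$, and let $b$ be the antipode of $a$ (a neighbor of $q$). Map $p,q \mapsto 1$, $a,b\mapsto 2$, all other vertices injectively to $3,\dots,10$. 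One checks that this map is injective on $2$-faces and identifies only the two edges $pa, qb$, so the pushforward $z$ is a $2$-cycle with $f=20$ faces, $v=10$ vertices, $e=29$ distinct edges, in which the edge $12$ has degree $4$. The degree-$2$ dual graph of $z$ is the dodecahedron graph with two edges removed; since the dodecahedron graph is $3$-edge-connected, this is still connected, so no proper nonempty subset of the faces of $z$ supports a cycle --- i.e., $z$ is inclusion-minimal. Cutting $\Sigma_0$ along the two disjoint edges $pa,qb$ yields an annulus, which is connected, so every re-pairing of the four half-edges produces a connected $\Sigma'$, exactly as the counterexample predicts. Thus the chain of reasoning ``some re-pairing disconnects $\Rightarrow$ contradiction'' never gets off the ground, and the inequality $3f/2\le\binom{v}{2}$ does not follow.

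The paper avoids this entirely by applying Euler's formula to the support complex of $z$ (rather than to $\Sigma$): with $\beta_0=\beta_2=1$ one has $f = e - v + 2 - \beta_1 \le \binom{v}{2} - v + 2 \le v^2/2$, so $v\ge\sqrt{2f}$. The only input is the trivial bound $e\le\binom{v}{2}$ on the number of \emph{distinct} edges in the support, together with $\beta_1\ge 0$; no control on edge multiplicities is needed. If you want to salvage your surface-based viewpoint, note that the same Euler computation on $\Sigma$ gives an \emph{upper} bound $v'\le f/2+2$, but to get a \emph{lower} bound on $v$ you must pass to the image complex, since the map $\Sigma\to\Delta_n^{(2)}$ need not be injective on edges.
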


\begin{proof}
The Euler formula for a finite $2$-dimensional complex is that
$$v - e + f = \beta_0 - \beta_1 + \beta_2.$$
For a minimal $2$-cycle, we have that $\beta_0 = \beta_2 = 1$, so
$$\hspace{-0.4in} (*) \hspace{0.4in} v-e+f = 2 - \beta_1.$$
Since $e \le {v \choose 2}$, this gives that
$$f \le {v \choose 2} - v + 2 - \beta_1 \le \frac{v^2}{2},$$
so $v \ge \sqrt{2f}$.

For the other inequality, we first assume without loss of generality that every edge is contained in at least one $2$-dimensional face---if not, then the edge can be deleted without affecting either $v$ or $f$. Since it is a $2$-cycle, every edge is then contained in at least two $2$-faces, so $$2e \le 3f.$$

Combining with (*) to eliminate the variable $e$, we have that $$f \ge 2v - 4 + 2\beta_1.$$ Since $\beta_1 \ge 0$, the inequality $v \le f/2 + 2$ follows.\\
\end{proof}

We will also require an upper bound on $Z(f,v)$, namely that for any fixed $\delta > 0$, and large enough $f$ and $v$, we have that
$$Z(f,v) \le C_{\delta}^f f^{f/2} v^{\delta f} .$$
We establish this bound in Section \ref{sec:cyc}.

We will use $\alpha$ and $\epsilon$ consistently throughout the section. Since $\alpha < 1/2$, we may assume without loss of generality that $\epsilon$ is small enough that $2 \alpha + \epsilon < 1$. 

Now we define constants $\delta$ and $M$ which only depend on $\alpha$ and $\epsilon$, and which we also use throughout the rest of the section.

First we set $$\delta = \min \left\{ \frac{1 - 2 \alpha}{4}, \frac{\epsilon}{3 ( 2 - 2 \alpha - \epsilon)} \right\}.$$
and $$M = \left\lceil \frac{8}{1- 2 \alpha} \right\rceil + 1.$$

%
%



\bigskip

First, we consider \emph{small cycles}, where $4 \le v \le M.$ We will show that with high probability the total number of such cycles is small relative to the number of $2$-dimensional faces. Then we can delete one face out of every small cycle without significantly affecting the number of faces.

Next, we consider \emph{intermediate cycles}, where $2M-2 \le f \le n$. By Lemma \ref{lem:range}, if $v \ge M+1$ then $f \ge 2M-2$, so there is no gap between small and intermediate cycles. We show that with high probability there are no intermediate cycles.

Finally, we consider \emph{large cycles}, where $n \le f \le n^{2- 2 \alpha - \epsilon}.$ Again, we show that with high probability there are no such cycles.

The conclusion is that w.h.p., the modified random $2$-complex only has \emph{very-large} cycles, i.e.\ cycles of area greater than $n^{2- 2 \alpha - \epsilon}$. \\

\subsection{Small cycles} \label{sec:smallcycles}  For a simplicial complex $\Delta$ and number $M$, let $C_{\Delta}(M)$ denote the number of cycles supported on at most $M$ vertices.

Define a subcomplex of $Y$ on $v$ vertices as {\it barely-dense} if it has exactly $2v-4$ faces.
Let $T_{\Delta}(M)$ denote the number of barely-dense subcomplexes of $\Delta$ with at most $M$ vertices. Note that $T_{\Delta}(M) \ge C_{\Delta}(M)$ for every $\Delta$ and $M$. Indeed, every cycle has $f \geq 2v - 4$ and so contains a barely-dense subcomplex so we one can remove all the cycles by removing one face from every barely-dense subcomplex.

Again, letting $\Delta = Y = Y(n,p)$ and by linearity of expectation,

$$\expect \left[T_{Y}(M) \right] = \sum_{v = 4}^{M} {n \choose v} {{v \choose 3} \choose 2v-4} p^{2v-4}.$$

Since $M$ is a constant which only depends on  $\alpha$, this is a finite sum and $v$ is bounded.
Moreover, the terms in the sum are strictly decreasing as $v$ increases for large enough $n$, so the first term in the sum dominates.

Then
\begin{align*}
\expect \left[T_{Y}(M) \right] & \le n^4 p^{4}\\
& = n^{4} n^{-4 + 4 \alpha}\\
& = n^{4 \alpha}.\\
\end{align*}

On the other hand, the expected number of $2$-faces $\expect[ f_2(Y)]$ is
$${n \choose 3} p \approx \frac{n^{2 + \alpha}}{6}.$$
The number of faces is a binomial random variable, therefore it is tightly concentrated around its mean (by Chernoff bounds, for example).

By Markov's inequality,
$$\prob \left[ T_{Y}(M) \ge n^{2 + \alpha/2} \right] \le \frac{n^{4 \alpha}}{ n^{2 + \alpha/2} },$$
which tends to zero as $n \to \infty$ since $\alpha < 1/2$.

Since $T_Y(M)$ dominates $C_Y(M)$, with high probability we can remove one face from every small cycle and still be left with almost all of the faces.


%

\bigskip

\subsection{Intermediate cycles} The sum
$$\sum_{f=2M-2}^{n-4} \sum_{v = \sqrt{2f}}^{f/2 + 2} {n \choose v} Z(f,v) p^f$$
is a union bound on the probability that there are any intermediate cycles: cycles whose number of faces $f$ satisfies $2M-2 \le f \le n$.

Since $f \le n$ and $v \le f/2 + 2 \le n/2$, we have that
$${n \choose v} \le {n \choose f/2 + 2},$$ and then we may use the estimate
$${n \choose f/2 + 2} \le \left( \frac{en}{f} \right)^{f/2+2},$$
valid for $f \ge 4$.

\begin{align}
\sum_{f=2M-2}^{n} \sum_{v = \sqrt{2f}}^{f/2 + 2} {n \choose v} Z(f,v) p^f
& \le
\sum_{f=2M-2}^{n} \sum_{v = \sqrt{2f}}^{f/2 + 2} \left( \frac{en}{f} \right)^{f/2+2} C_{\delta}^f f^{f/2} v^{\delta f} p^f,\\
& \le n^2 \sum_{f=2M-2}^{n} \sum_{v = \sqrt{2f}}^{f/2 + 2} \left( \left(\frac{en}{f} \right)^{1/2} C_{\delta} f^{1/2} v^{\delta} p \right)^f\\
& \le n^2 \sum_{f=2M-2}^{n} \sum_{v = \sqrt{2f}}^{f/2 + 2} \left( \left(en \right)^{1/2} C_{\delta} v^{\delta} n^{-1+ \alpha} \right)^f\\
& \le n^2 \sum_{f=2M-2}^{n} \sum_{v = \sqrt{2f}}^{f/2 + 2} \left( C'_{\delta} n^{\delta} n^{-1/2+ \alpha} \right)^f,\\
& \le n^3 \sum_{f=2M-2}^{n} \left( C'_{\delta} n^{\delta-1/2+ \alpha} \right)^f,
\end{align}
where $C'_{\delta}$ is a constant which only depends on $\delta$.

This sum can be bounded, term by term, by the infinite geometric series
$$a + ar + ar^2 + \dots = \frac{a}{1-r},$$
where $$a = n^3 \left( C'_{\delta} n^{\delta-1/2+ \alpha} \right)^{2M-2}$$
and $$r =  C'_{\delta} n^{\delta-1/2+ \alpha}.$$

We have chosen $\delta$ such that $$\delta \le \frac{1 - 2 \alpha }{4},$$
so
$$ \delta - \frac{1}{2} + \alpha \le \frac{ 2 \alpha - 1}{4} < 0.$$
We have chosen $M$ so that
$$2M  -2 \ge \frac{16}{1 - 2 \alpha} > 0.$$
So
$$(\delta-1/2+ \alpha)(2M- 2) \le -4,$$
and
$$ a= O \left(n^{-1} \right).$$
Since $ \delta-1/2+ \alpha < 0$, we also have that $r = o( 1)$.

Since $a \to 0$ and $r \to 0$ as $n \to \infty$, we also have that $a / (1-r) \to 0$, and the probability that there are any intermediate cycles tends to zero.

%

\subsection{Large cycles} Finally, we show that for any fixed $0 < \alpha < 1/2$ and $\epsilon > 0$,
$$\sum_{f=n}^{n^{2 - 2 \alpha - \epsilon }} \sum_{v = \sqrt{2f}}^{f/2 + 2} {n \choose v} Z(f,v) p^f \to 0,$$
as $n \to \infty$, so by the union bound, w.h.p.\ there are no large cycles.


We require the estimate $|Z(f,v)| \le C_{\delta}^f f^{f/2} v^{\delta v}$ again.
\begin{align*}
\sum_{f=n}^{n^{2 - 2 \alpha - \epsilon }} \sum_{v = \sqrt{2f}}^{f/2 + 2} {n \choose v} Z(f,v) p^f
& \le \sum_{f=n}^{n^{2 - 2 \alpha - \epsilon }} \sum_{v = \sqrt{2f}}^{f/2 + 2} {n \choose v} C_{\delta}^f f^{f/2} v^{\delta f} p^f\\
& \le \sum_{f=n}^{n^{2 - 2 \alpha - \epsilon }} C_{\delta}^f f^{f/2} p^f \sum_{v = \sqrt{2f}}^{f/2 + 2} {n \choose v} v^{\delta f},\\
& \le \sum_{f=n}^{n^{2 - 2 \alpha - \epsilon }} C_{\delta}^f f^{f/2} p^f 2^n f^{\delta f},\\
& \le \sum_{f=n}^{n^{2 - 2 \alpha - \epsilon }} \left(C_{\delta} f^{1/2 + \delta} p \right)^f 2^n,\\
& \le \sum_{f=n}^{n^{2 - 2 \alpha - \epsilon }} \left(C'_{\delta} f^{1/2 + \delta} n^{-1 + \alpha} \right)^f,\\
& \le \sum_{f=n}^{n^{2 - 2 \alpha - \epsilon }} \left(C'_{\delta} n^{(2 - 2 \alpha - \epsilon)(1/2 + \delta)} n^{-1 + \alpha} \right)^f.\\
\end{align*}

Since we chose $\delta > 0$ such that
$$\delta < \frac{\epsilon}{2(2 - 2 \alpha - \epsilon)},$$
we have
$$(2 - 2 \alpha - \epsilon)(1/2 + \delta) - 1 + \alpha < 0,$$
in which case we can bound the sum by a geometric series whose first term and ratio are tending to zero.

\subsection{Cycle counts from counting triangulated surfaces} \label{sec:cyc}

Recall that $Z(f,v)$ is the set of inclusion-minimal $2$-cycles on vertex set $[v] =\{ 1, 2, \dots, v \}$ with exactly $f$ faces. We bound $|Z(f,w)|$ by bounding $|T(f,w)|$, the number of combinatorial isomorphism types of connected triangulated surfaces on $f$ faces and $w$ vertices. Theorem 5.1 gives the following bound on $|T(f,w)|$: for any $\delta > 0$, there is a constant $C_\delta$ so that

\begin{equation} \label{5.1} |T(f,w) | \le C_\delta^f f^{f/2} w^{- (1 - \delta) w}. \end{equation}

Next we note that $|Z(f,v)|$ is related to $| T(f,w)|$ by the following inequality:

\begin{equation} \label{ZvsT} |Z(f,v)| \le \sum_{w \ge v} |T(f,w)| v^w. \end{equation}

We can see Inequality \ref{ZvsT} as follows. Any 2-cycle can be realized as the image of a simplicial map from a triangulated surface which is injective on 2-dimensional faces. If the 2-cycle is minimal, then the triangulated surface must be connected. For a fixed triangulated surface with $w$ vertices, the number of simplicial maps to the simplex with $v$ vertices is at most $v^w$.

Combining (\ref{5.1}) and (\ref{ZvsT}), we have

\begin{align*}
|Z(f,v)| & \le \sum_{w \ge v} |T(f,w)| v^w\\
&\le \sum_{w = v}^{f/2 + 2} \tilde{C}_{\delta}^f f^{f/2} w^{-(1-\delta) w} v^w\\
& = \tilde{C}_{\delta}^f f^{f/2} \sum_{w = v}^{f/2 + 2} (v / w^{1- \delta})^w\\
& \le \tilde{C}_{\delta}^f f^{f/2} (f/2+2) v^{\delta ( f/2 + 2)}\\
& \le C_{\delta}^f f^{f/2} v^{\delta f},
\end{align*}
for large enough $v$ and $f$, where $C_{\delta}$ is a constant which only depends on $\delta>0$.

\section{Counting triangulated surfaces} \label{sec:count}

Recall that $T(f,w)$ is the set of connected triangulated surfaces with $f$ faces and $w$ vertices, counted up to simplicial isomorphism. In this section, we prove an upper bound for $|T(f,w)|$.

\begin{thm} \label{triangcount} For every $\delta > 0$, there exists a constant $C_\delta$ so that for all $f, w$,

$$|T(f,w)| \le C_\delta^f f^{f/2} w^{-(1-\delta) w}. $$
\end{thm}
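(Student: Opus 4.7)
My plan is to encode each surface $\Sigma \in T(f,w)$ by its ``combinatorial map'' structure: a pair of permutations $(\sigma,\alpha)$ acting on a set $D$ of $3f$ darts (one per triangle-corner of $\Sigma$). Here $\alpha$ is a fixed-point-free involution pairing darts across edges, and $\sigma$ is a permutation whose disjoint cycles record the cyclic link of each vertex of $\Sigma$, so that $\sigma$ has exactly $w$ cycles. The $f$ triangular faces correspond to the length-$3$ cycles of $\sigma\alpha$. Relabeling the $3f$ darts leaves $\Sigma$ unchanged, so
$$|T(f,w)| \;\le\; \frac{N(f,w)}{(3f)!},$$
where $N(f,w)$ is the number of such pairs $(\sigma,\alpha)$ on a labeled dart set.

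The first step of the count should match the leading factor $f^{f/2}$. By a configuration-model estimate, the number of fixed-point-free involutions $\alpha$ on $3f$ elements is $(3f-1)!!$, which by Stirling is of order $(3f/e)^{3f/2}$. For each $\alpha$, specifying $\sigma$ is equivalent to specifying $\tau := \sigma\alpha$, and there are $(3f)!/(3^f f!)$ permutations $\tau$ with cycle type $3^f$. Dropping for now the $w$-cycle constraint on $\sigma$ and dividing by $(3f)!$, Stirling reduces the count to at most $C^f f^{f/2}$, exactly the leading factor in the theorem. The crucial cancellation comes from dividing the $(3f-1)!! \sim f^{3f/2}$ cubic-multigraph count by the $f!$-symmetry permuting triangles.

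To squeeze out the additional $w^{-(1-\delta)w}$ factor, I would partition the count by the cycle-type $(d_1,\dots,d_w)$ of $\sigma$, where $\sum_i d_i = 3f$ and $d_i \ge 1$. The number of $\sigma$ with that type is $(3f)!/(\prod_i d_i \cdot \prod_j m_j!)$, where $m_j$ is the number of parts equal to $j$. Combining this with the counts of $\alpha$ and of $\tau$, and applying Stirling with the constraint $\sum_i d_i = 3f$, I aim to show by a log-convexity / AM--GM argument that restricting $\sigma$ to have exactly $w$ cycles costs roughly a factor of $w^{-w}$. The $w^{\delta w}$ slack absorbs the lower-order corrections from Stirling and from the summation over integer partitions of $3f$ into $w$ parts.

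I expect the main obstacle to be controlling the three constraints simultaneously: $\alpha$ a fixed-point-free involution, $\sigma\alpha$ of cycle type $3^f$, and $\sigma$ having exactly $w$ cycles. A crude independent union bound over cycle types yields only about $w^{-w/2}$, which falls short. Two natural routes around this are (i) a character-theoretic identity such as Harer--Zagier or the Goulden--Jackson framework for counting pairs of permutations with prescribed cycle types, or (ii) a direct combinatorial construction that builds $\Sigma$ one triangle at a time along a spanning tree of the dual graph, with the $w$ vertex-classes pigeonholed across the $f$ attachment steps. Either route requires careful bookkeeping to absorb the polynomial and logarithmic corrections into the constants $C^f$ and $C_\delta^f$, and to ensure that the estimate is uniform across the full range $\sqrt{2f} \le w \le f/2 + 2$.
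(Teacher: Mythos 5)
Your combinatorial-map encoding is a genuinely different framework from the paper's ``gluing story'' approach, and your derivation of the leading factor $C^f f^{f/2}$ from $(3f-1)!!/(3^f f!)$ via Stirling is sound. But the hard content of the theorem is the factor $w^{-(1-\delta)w}$, and your proposal does not actually prove it. You observe yourself that the crude union bound over cycle types of $\sigma$ yields only $w^{-w/2}$; this is exactly where the paper's authors say the easy argument stops --- they remark explicitly that $|T(f,w)|\le C^f f^{f/2} w^{-w/2}$ is much easier but too weak for their applications. The paper closes the gap with a delicate amortization (Lemma~\ref{nearmoves}): it tracks a potential $F(X_k)=V_{int}(X_k)-H_{cl}(X_k)+B(X_k)$ over the gluing process, where $B$ is a boundary-length term with weight $\beta(l)=\max(0,1-\delta\log_{10}l)$, and shows that each ``$D_\delta$-near'' gluing raises $F$ by at most $1+\delta$ while far gluings never raise it. This yields $V(X)\le(1+\delta)N+H(X)$ with $N$ the number of near gluings, and it is precisely the constant $1+\delta$ (rather than the trivial $2$) in front of $N$ that converts to the exponent $-(1-\delta)w$. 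The sticking point is the gluings that create two internal vertices at once (type $C$ in the paper's terminology): naive pigeonholing across the $3f/2$ gluing steps cannot beat $w^{-w/2}$, and one must charge those two vertices partly against the disappearance of a short boundary loop, which is what the $\beta$-weighted boundary term accomplishes.

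Of your two proposed repairs, route (ii) (building $\Sigma$ triangle-by-triangle along a dual spanning tree and pigeonholing vertex creation) is essentially the paper's plan and, if pushed through, would need an amortization of the same kind; route (i) (Harer--Zagier / Goulden--Jackson identities) would require uniform asymptotics across $\sqrt{2f}\le w\le f/2+2$ that are not available off the shelf, so it is a plausible research direction rather than a proof. Two smaller issues in the setup: the pair-of-permutations model as you describe it encodes only orientable surfaces, while $T(f,w)$ includes non-orientable ones, so you would need the flag/hypermap model or a separate non-orientable count; and dividing $N(f,w)$ by $(3f)!$ lower-bounds, rather than upper-bounds, the number of orbits --- to get an upper bound on $|T(f,w)|$ you need a bound on the size of the map automorphism group (the paper's Lemma~\ref{autcount} supplies $|Aut(X)|\le 6f$), although this only costs a polynomial factor absorbable into $C_\delta^f$.
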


In order to estimate $|T(f,w)|$, we imagine starting with $f$ simplices and gluing together edges one at a time until we get a closed surface. This point of view was suggested by Brooks and Makover, \cite{BM}.

We make a formal definition of this gluing process. Let $\Delta_1, ..., \Delta_f$ be copies of the standard 2-simplex. Each of them has three edges, $\Delta_{j,a}$ with $a = 0, 1, 2$. A gluing story for these $f$ simplices is a sequence of $3f/2$ gluing maps $g_1, ..., g_{3f/2}$. Each gluing map $g_k$ is a simplicial isomorphism from an edge $\Delta_{j(k), a(k)}$ to another edge $\Delta_{j'(k), a'(k)}$. (A gluing map is allowed to map one edge of a simplex to another edge of the same simplex, but it is not allowed to map an edge to itself.) The maps $g_k$ make a gluing story if each edge is involved in exactly one gluing map $g_k$. We abbreviate a gluing story by $\vec g$, and we let $GS(f)$ denote the set of all gluing stories for $f$ 2-simplices.

It's straightforward to count the total number of gluing stories.

\begin{prop} \label{numbergluestory} $|GS(f)| =(3f)! 2^{3f/2}$.
\end{prop}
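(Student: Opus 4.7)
The plan is to count gluing stories directly by constructing them one gluing at a time. By the definition given just above, a single gluing map $g_k$ is determined by three independent choices: a source edge among the $3f$ edges of $\Delta_1,\dots,\Delta_f$, a distinct target edge, and a simplicial isomorphism between them. Since each edge has exactly two vertices, there are exactly $2$ simplicial isomorphisms between any ordered pair of distinct edges, so the only question is how many ordered pairs of edges are available at each step.

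The execution is then a falling-factorial calculation. For $g_1$ there are $3f$ choices of source edge, $3f-1$ choices of target edge (the sole constraint is that a gluing map is not allowed to map an edge to itself), and $2$ choices of isomorphism. After $k-1$ gluing maps have been specified, exactly $2(k-1)$ of the $3f$ edges are used up, so $g_k$ admits $3f-2(k-1)$ choices of source, $3f-2k+1$ choices of target, and again $2$ choices of isomorphism. Multiplying over $k=1,\dots,3f/2$ gives
$$|GS(f)| \;=\; \prod_{k=1}^{3f/2} \bigl(3f-2k+2\bigr)\bigl(3f-2k+1\bigr)\cdot 2 \;=\; (3f)!\cdot 2^{3f/2},$$
since the product of the two linear factors, as $k$ ranges from $1$ to $3f/2$, enumerates every integer from $3f$ down to $1$ exactly once.

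There is no genuine obstacle here; the proposition is essentially a bookkeeping exercise. The only subtlety worth a sentence is that the prohibition against gluing an edge to itself is automatically enforced by requiring source $\ne$ target in the $3f(3f-1)$-style count, while gluing two distinct edges of the same 2-simplex is allowed and is correctly captured by the same count. No other identifications are imposed on gluing stories, so the formula follows.
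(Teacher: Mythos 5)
Your proof is correct and takes essentially the same route as the paper: the paper phrases the $(3f)!$ factor as ``list the $3f$ edges in some order'' (so that positions $2k-1$ and $2k$ give the domain and range of $g_k$), while you obtain the same count by a sequential falling-factorial product, and both then multiply by $2$ for each of the $3f/2$ maps. The bookkeeping in your product telescopes exactly as claimed, so the two arguments are the same up to presentation.
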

\begin{proof}
We first need to list the sequence of domains of $g_k$ and ranges of $g_k$. This amounts to listing the $3f$ edges of the triangles $\Delta_j$ in some order, so there are $(3f)!$ choices. After choosing the domain and range of each $g_k$, we have two choices for each map $g_k$, because there are two simplicial isomorphisms from one interval to another.
\end{proof}

Given a gluing story $\vec g$, we can define a 2-dimensional surface $X(\vec g)$ by starting with the $f$ simplices $\Delta_1, ..., \Delta_f$ and identifying points $p$ and $q$ if one of the gluing maps takes $p$ to $q$. The resulting object is not always a triangulated surface. It is a slightly more general object called a pseudomanifold. For example, $X(\vec g)$ could consist of two triangles, one on top of the other, with corresponding edges glued together. The resulting surface is homeomorphic to $S^2$, but it is not a triangulated surface. (Recall that a triangulated surface is a simplicial complex which is homeomorphic to a 2-dimensional manifold. This last example is not a simplicial complex.) We will discuss pseudomanifolds more below.

Here is an outline of the proof of Theorem \ref{triangcount}. Each surface $X \in T(f,w)$ is simplically isomorphic to $X(\vec g)$ for some $g \in GS(f)$. In fact, each surface $X \in T(f,w)$ can be realized by many different gluing stories, and we have to estimate the size of this overcount.

\begin{lem} \label{overcount} There is a constant $C > 0$ so that the following holds. For every $X \in T(f,w)$, there are at least $C^{-f} f^{5f/2}$ gluing stories $\vec g$ so that $X(\vec g)$ is simplicially isomorphic to $X$.
\end{lem}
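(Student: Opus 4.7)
The strategy is to reverse-engineer gluing stories from $X$ itself: label the $f$ faces of $X$ by simplices $\Delta_1, \ldots, \Delta_f$, order the $3f/2$ edge-identifications dictated by $X$, and freely orient each one. Counting these free choices produces the claimed lower bound.

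In more detail, I would parameterize most of the gluing stories realizing $X$ by triples $(\phi, \sigma, \tau)$. A \emph{labeling} $\phi$ is a bijection from $\{\Delta_1, \ldots, \Delta_f\}$ to the faces of $X$ together with, for each $j$, a simplicial isomorphism from $\Delta_j$ to its assigned face; there are $f! \cdot 6^f$ such labelings ($f!$ for the face assignment and $3! = 6$ for each of the $f$ vertex identifications). Given $\phi$, each edge of $X$ is the common image of a unique pair of simplex-edges, prescribing $3f/2$ pairs that must be glued. An \emph{ordering} $\sigma$ is a linear ordering of these pairs, contributing $(3f/2)!$, and a \emph{direction} $\tau$ designates one edge of each pair as the domain, contributing $2^{3f/2}$. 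From $(\phi, \sigma, \tau)$ I would build $\vec g(\phi, \sigma, \tau)$ by performing the $3f/2$ gluings in the order $\sigma$, each with domain and range specified by $\tau$ and with simplicial isomorphism forced by $\phi$: both paired edges are identified with a common edge of $X$ via $\phi$, uniquely determining a simplicial isomorphism between them. A routine check then shows $X(\vec g(\phi, \sigma, \tau)) = X$, because the vertex identifications cohere globally through $\phi$, so the quotient is an honest surface rather than a strictly larger pseudomanifold.

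Next I would quantify how badly the map $(\phi, \sigma, \tau) \mapsto \vec g(\phi, \sigma, \tau)$ overcounts. The ordering $\sigma$ and direction $\tau$ can be read directly off $\vec g$, so any collision must occur in $\phi$ alone; if $\phi_1$ and $\phi_2$ induce identical pairings and identical simplicial isomorphisms on each pair then $\phi_2 = \mu \circ \phi_1$ for some $\mu \in \mathrm{Aut}(X)$, and conversely any such $\mu$ produces a colliding pair. Since a simplicial automorphism of a connected triangulated surface is determined by its action on a single flag $(v \subset e \subset F)$, and $X$ has $6f$ flags, $|\mathrm{Aut}(X)| \le 6f$. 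Hence the number of distinct gluing stories realizing $X$ is at least
$$\frac{f! \cdot 6^f \cdot (3f/2)! \cdot 2^{3f/2}}{6f}.$$
By Stirling, $f! \ge (f/e)^f$ and $(3f/2)! \ge (3f/(2e))^{3f/2}$, so the numerator is bounded below by $c^f f^{5f/2}$ for an absolute constant $c > 0$. The polynomial factor $6f$ is absorbed by slightly weakening the exponential base, yielding the claimed $C^{-f} f^{5f/2}$ bound.

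The main step requiring care is the bookkeeping: verifying that every triple yields a valid gluing story whose quotient is literally equal to $X$, and that the overcount from triples to stories is controlled exclusively by $\mathrm{Aut}(X)$ with no hidden stabilizers. Both come down to comparing what a gluing story encodes (an ordered sequence of ordered pairs of simplex-edges, each with a designated simplicial isomorphism) with what a labeling encodes (a face assignment together with vertex identifications), as sketched above.
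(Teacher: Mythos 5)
Your argument is correct, and it follows the same broad outline as the paper's: over-parameterize the gluing stories that realize $X$, control the over-count by $|\mathrm{Aut}(X)| \le 6f$ (the paper's Lemma~\ref{autcount}, which you re-derive via the flag-counting argument), and finish with Stirling. The bookkeeping, however, is organized differently. The paper fixes one gluing story $\vec g_0$ realizing $X$, lets $S_{3f/2} \times S_f$ act on $GS(f)$ by reordering the gluing maps and permuting the simplices, and then applies orbit--stabilizer together with an injection $\mathrm{Stab}(\vec g_0) \hookrightarrow \mathrm{Aut}(X)$; the resulting lower bound is the orbit size $(3f/2)!\,f!/|\mathrm{Stab}(\vec g_0)|$, which counts only the orbit of that one $\vec g_0$. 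Your parameterization by triples $(\phi,\sigma,\tau)$ is more direct: it surjects onto \emph{all} gluing stories realizing $X$, the fibers are identified exactly with $\mathrm{Aut}(X)$ acting on $\phi$, and you additionally keep track of the vertex relabelings within each simplex and the domain/range direction of each gluing, picking up the extra $6^f \cdot 2^{3f/2}$ factor that the paper's $S_{3f/2}\times S_f$ action does not see. Both factors are absorbed into $C^{-f}$ and do not change the $f^{5f/2}$ growth, but your version does give a somewhat sharper constant and avoids the need to separately construct one particular $\vec g_0$ and prove a stabilizer injection. Both approaches are valid; the essential content (Lemma~\ref{autcount} plus Stirling) is shared.
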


This lemma is similar to results in \cite{BM}, but we will give a self-contained proof.

Next we will estimate the number of gluing stories that produce triangulated surfaces with $w$ vertices. This estimate is the new ingredient in the proof of Theorem \ref{triangcount}.

\begin{lem} \label{gluecount} For any $\delta > 0$, there is a constant $C_\delta$ so that for any $f,w$, the number of gluing stories $\vec g \in GS(f)$ on $f$ so that $X(\vec g)$ is a connected triangulated surface with $w$ vertices is $\le C_\delta^f f^{3f} w^{-(1-\delta)w}$.
\end{lem}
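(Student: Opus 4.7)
My plan is to organize the gluing stories by the vertex structure of the resulting surface, reduce to a coefficient-extraction problem for a generating function, and estimate via a saddle-point argument.

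I would first factor each gluing story into (i) an unordered matching of the $3f$ half-edges with an orientation for each matched pair, (ii) an ordering of the $3f/2$ gluings, and (iii) a domain/range choice at each gluing; only (i) affects $X(\vec g)$. This reduces the problem to bounding the number of (matching, orientation) pairs that produce a connected triangulated surface with $w$ vertices, after which one multiplies by $(3f/2)! \cdot 2^{3f/2}$.

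Next, each such pairing identifies the $3f$ corners into $w$ equivalence classes of sizes $d_1, \ldots, d_w$, with $d_i \ge 3$ (each link is at least a triangle) and $\sum d_i = 3f$. The manifold hypothesis forces the link of each vertex to be a single cycle, which equips each class with a cyclic ordering of its corners. For a fixed unordered degree sequence with multiplicities $m_k$, the number of (partition, cyclic orderings) data is
$$\frac{(3f)!}{\prod_i d_i!\,\prod_k m_k!}\prod_i (d_i - 1)! \;=\; \frac{(3f)!}{\prod_i d_i\,\prod_k m_k!},$$
and each such datum determines at most one valid (matching, orientation). Summing over admissible degree sequences and recognizing the generating function,
$$\sum_{\substack{d_i \ge 3\\\sum d_i = 3f}} \frac{1}{\prod_i d_i\,\prod_k m_k!} \;=\; \frac{1}{w!}\,[x^{3f}]\,h(x)^w,\qquad h(x) = -\log(1-x) - x - \tfrac{x^2}{2},$$
where the subtraction of the low-order terms of $-\log(1-x)$ encodes the constraint $d_i \ge 3$.

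The heart of the argument is then an estimate of $[x^{3f}]\,h(x)^w$ via Cauchy's integral formula at a saddle-point radius $r = r(f,w) \in (0,1)$. After balancing the saddle of $h(r)^w/r^{3f}$ and combining with Stirling's approximations for $(3f)!$, $(3f/2)!$, and $w!$, one extracts the claimed bound $C_\delta^f f^{3f} w^{-(1-\delta)w}$.

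The main obstacle is this final analytic step, together with the loss of sharpness coming from the fact that many candidate (partition, cyclic ordering) data fail the consistency required of a valid edge matching. Tightening the estimate likely requires either passing through the combinatorial-map encoding $\sigma = \tau \phi^{-1}$ (where $\phi$ is the fixed product of $f$ 3-cycles on triangles) and exploiting character-theoretic identities in the symmetric group, or invoking classical map-enumeration asymptotics. The manifold condition $d_i \ge 3$ is indispensable throughout, as without it the generating function would admit degenerate degree-$1$ or degree-$2$ vertices that inflate the count and destroy the $w^{-(1-\delta)w}$ decay.
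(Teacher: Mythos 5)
Your approach is genuinely different from the paper's, which never attempts a global enumeration at all; but there is a fatal gap in your combinatorial reduction, and it occurs before the saddle-point analysis that you flag as the main obstacle.

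The quantity $\frac{(3f)!}{w!}[x^{3f}]h(x)^w$ is exactly the number of permutations $\sigma$ of the $3f$ corners having $w$ cycles, each of length at least $3$. Your observation that each such $\sigma$ ``determines at most one valid (matching, orientation)'' is correct, but the resulting upper bound is far too lossy: a permutation $\sigma$ arises from a valid gluing only if the derived permutation $\sigma^{-1}\phi$ (with $\phi$ the fixed product of $f$ disjoint $3$-cycles encoding the triangles) is a fixed-point-free involution, and you are discarding that constraint entirely. Summing your bound over all $w$ gives roughly $(3f)!\,e^{-3/2}$, whereas the total number of (matching, orientation) pairs is $(3f)!/(3f/2)!$; so the overcount is of order $(3f/2)!\approx C^f f^{3f/2}$, which cannot be absorbed into $C_\delta^f$. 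Concretely, for $w$ near $f/2$ (the sphere regime, which is where most of the weight lies) a saddle-point estimate gives $[x^{3f}]h(x)^{f/2}\le C^f$, and after Stirling your bound on the number of gluing stories is about $C^f f^{4f}$, while the lemma requires roughly $C^f f^{5f/2+\delta f/2}$. The shortfall, $f^{3f/2-\delta f/2}$, is superexponential. Enforcing the involution constraint via the $\sigma\alpha=\phi$ encoding and character sums, as you suggest, is not a ``tightening'' but a different and much harder argument that you do not carry out, and without it the bound fails outright.

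The paper's proof sidesteps enumeration. It processes the $3f/2$ gluings one at a time, classifies each step as $D_\delta$-near or far, and builds a potential $F(X_k)=V_{\mathrm{int}}(X_k)-H_{\mathrm{cl}}(X_k)+B(X_k)$ whose boundary term $B$ is engineered (via a slowly decaying weight $\beta(l)=\max(0,1-\delta\log_{10}l)$ on boundary-loop lengths, minus the longest loop per component) so that $F$ increases by at most $1+\delta$ on near moves and never increases on far moves (Lemma \ref{nearmoves}). Since $F(X_0)=0$ and $F(X_{3f/2})=V(X)-H(X)$, this forces at least $(w-1)/(1+\delta)$ near moves when $X(\vec g)$ is connected with $w$ vertices; near moves admit only $O_\delta(f)$ choices against $O(f^2)$ for arbitrary moves, giving the $N^{-N}$ and hence $w^{-(1-\delta)w}$ decay directly, with no generating function and no consistency constraint to control.
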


\noindent Combining Lemma \ref{overcount} and Lemma \ref{gluecount} gives Theorem \ref{triangcount}.

If we compare Proposition \ref{numbergluestory} and Lemma \ref{gluecount}, we see that the fraction of gluing stories $\vec g \in GS(f)$ so that $X(\vec g)$ is a triangulated surface with $w$ vertices is at most roughly $w^{-w}$. In particular, gluing stories that produce triangulated surfaces with many vertices are quite rare.

Here is the intuition behind our argument. Imagine that we carry out the gluings one map at a time. Before the first gluing, we have $f$ disjoint triangles. At each step of the process, we glue together two of the edges in the boundary. After performing $k$ gluings, we have a surface with boundary, called $X_k(\vec g)$. After using all $3f/2$ gluing maps, we have $X(\vec g)$. A vertex of $X_k(\vec g)$ is called a {\it boundary vertex} if it lies in the boundary of $X_k(\vec g)$, and an {\it internal vertex} otherwise. Let $V_{int} (X_k)$ be the set of internal vertices of $X_k (\vec g)$. Since $X(\vec g)$ has no boundary, every vertex of $X(\vec g)$ is internal. On the other hand, $X_0(\vec g)$ has zero internal vertices. The only way that $X_{k+1}(\vec g)$ can have more internal vertices than $X_{k}(\vec g)$ is if $g_{k+1}$ glues together two edges of $\partial X_{k}(\vec g)$ that share a vertex. This is a rare event. The number of edges in $\partial X_{k}(\vec g)$ is $3f - 2 k$. Each boundary edge shares a vertex with at most two other edges. If we randomly pick two edges of $\partial X_k(\vec g)$, the probability that they share a vertex is on the order of $(3f - 2 k)^{-1}$. This suggests that gluing stories that produce many vertices are quite rare.

In our proof, we turn this intuition into a precise estimate. Getting the quantitative result that we need from this approach was technically tricky, and we discuss this more below. For example, it is much easier to prove that $|T(f,w)| \le C^f f^{f/2} w^{- w /2}$. However, this weaker estimate is too weak for our applications in the paper, and we need to do some work to get the best estimate that we can.

In subsection \ref{subsecovercount} we estimate the overcounting and prove Lemma \ref{overcount}. In subsection \ref{subsecmanyvertices} we prove Lemma \ref{gluecount}.

\subsection{Gluing stories for a given triangulated surface} \label{subsecovercount}

In this section, we prove Lemma \ref{overcount}. If $X$ is a connected triangulated surface with $f$ faces, then we have to prove that there are at least $C^{-f} f^{5f/2}$ different gluing stories $\vec g \in GS(f)$ so that $X(\vec g)$ is simplicially isomorphic to $X$.

The simplicial automorphisms of a triangulated surface $X$ will be involved in the proof. Let $Aut(X)$ be the group of simplicial automorphisms of $X$. We will need the following well-known estimate.

\begin{lem} \label{autcount} If $X$ is a connected triangulated surface with $f$ faces, then $|Aut(X)| \le 6 f. $ \end{lem}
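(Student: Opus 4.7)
The plan is to exhibit a free action of $\mathrm{Aut}(X)$ on a set of combinatorial objects (flags) of size exactly $6f$, from which the bound $|\mathrm{Aut}(X)| \le 6f$ is immediate. Define a \emph{flag} of $X$ to be a triple $(F,e,v)$ where $F$ is a $2$-face, $e$ is an edge of $F$, and $v$ is a vertex of $e$. Each of the $f$ faces contributes $3 \cdot 2 = 6$ such triples, so there are exactly $6f$ flags, and $\mathrm{Aut}(X)$ acts on this set in the obvious way.

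The main content is to show that this action is free: if $\phi \in \mathrm{Aut}(X)$ fixes some flag $(F,e,v)$, then $\phi$ is the identity. First, $\phi$ fixes $F$, $e$, and $v$. Since $\phi$ fixes the edge $e = \{v,v'\}$ setwise and fixes $v$, it also fixes $v'$. Since $\phi$ fixes the face $F = \{v,v',v''\}$ setwise and fixes two of its three vertices, it fixes $v''$ as well. So $\phi$ fixes $F$ pointwise.

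Now I would propagate this across the surface by using the dual graph of $X$ (one node per $2$-face, with an edge between two faces that share a $1$-face), which is connected because $X$ is a connected $2$-manifold. Let $F'$ be any face adjacent to $F$ via a shared edge $e'$. Since $X$ is a closed triangulated surface, $e'$ lies in exactly two faces, $F$ and $F'$. Now $\phi$ fixes $e'$ (since $e'$ is an edge of $F$ which is fixed pointwise), so $\phi$ permutes $\{F, F'\}$; but $\phi(F)=F$, so $\phi(F') = F'$. The two vertices of $F'$ on $e'$ are fixed, and the third vertex of $F'$ must then be fixed as well, since $\phi$ permutes the vertices of $F'$. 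Hence $\phi$ fixes $F'$ pointwise. Iterating along paths in the dual graph, $\phi$ fixes every face pointwise, and hence fixes every vertex of $X$. A simplicial map is determined by its action on vertices, so $\phi = \mathrm{id}$.

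With the action free, the orbit of any flag has size $|\mathrm{Aut}(X)|$, so $|\mathrm{Aut}(X)|$ divides $6f$ and in particular $|\mathrm{Aut}(X)| \le 6f$. The only place where any nontrivial topology is used is the step asserting that every edge lies in exactly two faces (closedness) together with connectedness of the dual graph (which follows from connectedness of $X$ combined with the manifold condition); everything else is bookkeeping, so I do not anticipate a serious obstacle.
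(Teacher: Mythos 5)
Your proof is correct and takes essentially the same approach as the paper: both show that an automorphism is determined by its restriction to a single face (your freeness of the flag action is the paper's ``each of the $6f$ simplicial isomorphisms from $\Delta_1$ to a face of $X$ extends to at most one automorphism''), and both propagate this uniqueness across the surface via connectedness together with the fact that each edge lies in exactly two faces. Your flag packaging additionally yields the slightly stronger statement that $|\mathrm{Aut}(X)|$ divides $6f$, but the underlying argument is the same.
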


\begin{proof} Let $\Delta_1$ be a face in $X$. There are $6 f$ simplicial isomorphisms from $\Delta_1$ to one of the faces of $X$. We claim that each of these maps extends to at most one simplicial isomorphism from $X$ to itself.

Let $\phi_1, \phi_2: X \rightarrow X$ be simplicial isomorphisms, and suppose that $\phi_1$ and $\phi_2$ agree on a 2-face $\Delta$. We say that two faces $\Delta, \Delta' \subset X$ are adjacent if they have a common edge. We will show that $\phi_1$ and $\phi_2$ also agree on each simplex adjacent to $\Delta$. Suppose that $\Delta'$ is adjacent to $\Delta$ and that $e = \Delta \cap \Delta'$ is the edge that they both contain. We know that $\phi_1$ and $\phi_2$ agree on the edge $e$. Now $\phi_1(e) = \phi_2(e)$ is an edge of $X$, and so it lies in exactly two faces of $X$. We know that $\phi_1(\Delta) = \phi_2(\Delta)$ is one of these faces. Since $\phi_1$ and $\phi_2$ are both simplicial isomorphisms, they must both map $\Delta$ to the other of these faces. Now $\phi_1$ and $\phi_2$ agree on the edge $e$, and so they must also agree on the third vertex of $\Delta'$, and so $\phi_1$ and $\phi_2$ agree on $\Delta'$.

Now suppose $\phi_1, \phi_2$ are simplicial isomorphisms $X \rightarrow X$ that agree on $\Delta_1$. By the result of the last paragraph, they must agree on all the simplices that are adjacent to $\Delta_1$. Iterating the argument, they must agree on all the simplices adjacent to these simplices. Since $X$ is connected, by iterating this argument, we see that $\phi_1$ and $\phi_2$ must agree everywhere. \end{proof}

Now we turn to the proof of Lemma \ref{overcount}.

\begin{proof} Let $X$ be a connected triangulated surface with $f$ faces. First we check that there is a gluing story $\vec g \in GS(f)$ so that $X$ is simplicially isomorphic to $X(\vec g)$. Number the faces of $X$ as $F_1$, ..., $F_f$. For each face $F_j$, pick an identification of the face with a standard simplex $\Delta_j$. Now number the edges of $X$ from 1 to $3f/2$. Suppose that $e_k$ is the $k^{th}$ edge, and that it lies in faces $F_j(k)$ and $F_{j'(k)}$.  The edge $e_k$ corresponds to an edge $\Delta_{j(k), a(k)} \subset \Delta_{j(k)}$ and to an edge $\Delta_{j'(k), a'(k)} \subset \Delta_{j'(k)}$. Since these two edges are identified in $X$, we get a simplicial isomorphism $g_k: \Delta_{j(k), a(k)} \rightarrow \Delta_{j'(k), a'(k)}$. The sequence of $g_k$ make a gluing story $\vec g \in GS(f)$ and $X(\vec g)$ is simplically isomorphic to $X$.

Given a gluing story $\vec g$, we now produce many other gluing stories that lead to the same surface $X(\vec g)$. These other gluing stories come from relabelling the characters in the initial gluing story $\vec g$. There are two different kinds of relabelling that we can do. We can reorder the gluing maps $g_k$. In other words, we can consider a gluing story with the same set of gluing maps in a different order. Reordering the gluing maps $g_k$ defines an action of the symmetric group $S_{3f/2}$ on $GS(f)$. We can also relabel the simplices $\Delta_1, ..., \Delta_f$. This relabelling gives an action of $S_f$ on $GS(f)$. These two actions commute, and so we get an action of $S_{3f/2} \times S_f$ on $GS(f)$. If $\vec g$ and $\vec g'$ are in the same orbit of this action, then $X(\vec g)$ and $X(\vec g')$ are simplicially isomorphic.

This action is not necessarily free. We note that $|S_{3f/2} \times S_f| = (3f/2)! f! \ge C^{-f} f^{5f/2}$. Recall that we found an element $\vec g \in GS(f)$ so that $X( \vec g) $ is simplicially isomorphic to our given $X$. For every $\vec h$ in the $S_{3f/2} \times S_f$-orbit of $\vec g$, $X(\vec h)$ is also simplicially isomorphic to $X$. The size of this orbit is
 $|S_{3f/2} \times S_f| / |Stab(\vec g)|$. (Here $Stab(\vec g) \subset S_{3f/2} \times S_f$ is the stabilizer subgroup: the set of group elements $\psi \in S_{3f/2} \times S_f$ so that $\psi(\vec g) = \vec g$.)

To finish the proof, we will check that $|Stab(\vec g)| \le |Aut(X)|$. By Lemma \ref{autcount} above, $|Aut(X)| \le 6f$.
To see that $|Stab(\vec g)| \le |Aut(X)|$, we will construct a natural injection $Stab(\vec g) \rightarrow Aut ( X(\vec g))$.

Suppose that $(\psi_1, \psi_2) \in Stab(\vec g)$, where $\psi_1 \in S_{3f/2}$ and $\psi_2 \in S_f$. We use $\psi_2$ to define a simplicial map from $X(\vec g)$ to itself. The map sends $\Delta_j$ to $\Delta_{\psi_2(j)}$ by the identity (remember, the $\Delta_j$ are all copies of the standard 2-simplex).
We have to check that this map respects all of the gluings. But if $g_k$ glues $\Delta_{j(k), a(k)}$ to $\Delta_{j'(k), a'(k)}$, then $g_{\psi_1(k)}$ glues $\Delta_{\psi_2(j(k)), a(k)}$ to $\Delta_{\psi_2(j'(k)), a'(k)}$ by the same simplicial isomorphism. Therefore, our map does respect all the gluings, and it gives a simplicial map. Applying the same construction with $(\psi_1^{-1}, \psi_2^{-1})$ we get an inverse simplicial map, so $(\psi_1, \psi_2)$ was mapped to a simpicial isomorphism $X \rightarrow X$.

We now have a group homomorphism $Stab(\vec g) \rightarrow Aut( X(g))$. We next show that this homomorphism is injective. Suppose that $(\psi_1, \psi_2) \in Stab(\vec g)$ corresponds to the identity map $X \rightarrow X$. By construction, we see that $\psi_2$ is the identity. But just reordering the gluing maps will produce a different gluing story unless $\psi_1$ is the identity also. So the kernel of the homomorphism is the identity. \end{proof}

\subsection{Gluing stories with many vertices} \label{subsecmanyvertices}

In this section, we prove Lemma \ref{gluecount}. Recall that Lemma \ref{gluecount} gives a bound on the number of gluing stories $\vec g \in GS(f)$ so that $X(\vec g)$ is a connected triangulated surface with $w$ vertices.

Let $\vec g \in GS(f)$ be a gluing story. For $0 \le k \le 3f/2$, let $X_k(\vec g)$ be the space formed from the simplices $\Delta_1, ..., \Delta_f$ by making identifications using the first $k$ gluing maps, $g_1, ..., g_k$. The space $X_0(\vec g)$ is just a disjoint union of $f$ simplices, and $X_{3f/2}(\vec g) = X(\vec g)$.
To help understand the number of vertices in $X(\vec g)$, we will consider the vertices of $X_k(\vec g)$ for every $k$ and keep track of how they change as $k$ increases.

Before turning to the proof, we should talk briefly about what type of object $X_k(\vec g)$ is. We observed above that for a general gluing story $\vec g \in GS(f)$, the space $X(\vec g)$ may not be a triangulated surface. In Lemma \ref{gluecount}, we can restrict attention to gluing stories so that $X(\vec g)$ is a triangulated surface.  But even if $X(\vec g)$ is a triangulated surface, the spaces $X_k(\vec g)$ may not all be triangulated surfaces with boundary. For example, consider the second-to-last space $X_{(3f/2) - 1}(\vec g)$. The boundary of this space must consist of two edges, and the last gluing map $g_{3f/2}$ glues together these two edges. These two edges must form a loop of length 2, which means that they have the same boundary vertices. Therefore, $X_{(3f/2) - 1}(\vec g)$ is not a simplicial complex, and so it is certainly not a triangulated surface with boundary.

The spaces $X_k(\vec g)$ are always pseudomanifolds with boundary. In Subsection \ref{subsecbackpseudo}, we give an appendix recalling all the definitions and facts about pseudomanifolds with boundary that we need.

For a general gluing story $\vec g$, the boundary of $X_k(\vec g)$ consists of a disjoint union of loops, and each loop consists of at least one edge. These loops are called the connected components of the boundary of $X_k(\vec g)$. For general $\vec g$, the boundary of $X_k(\vec g)$ may contain a loop with only one edge. However, if $X(\vec g)$ is a triangulated surface, then each component of the boundary of $X_k(\vec g)$ contains at least two edges. We can see this as follows. Since $X(\vec g)$ is a triangulated surface, in particular a simplicial complex, there is no edge in $X(\vec g)$ from a vertex to itself. But then $X_k(\vec g)$ cannot contain any edge from a vertex to itself either. For more details, see Subsection \ref{subsecbackpseudo}.

The pseudomanifold with boundary $X_{k+1}$ is formed from $X_k$ by gluing together two of the boundary edges of $X_k$ by the gluing map $g_{k+1}$. To prove Lemma \ref{gluecount}, we will pay attention to whether we glue together ``nearby'' edges or ''far apart'' edges. We say that two edges of $\partial X_k$ are adjacent if they share a vertex. We say that edges $e$ and $e'$ in $\partial X_k$ are $D$-near if there is a string of adjacent edges in $\partial X_k$, $e = e_0$ adjacent to $e_1$, $e_j$ adjacent to $e_{j+1}$ for $1 \leq j < D$, and $e_D = e'$.
We will show that if $X(\vec g)$ is a triangulated surface with many vertices, then many of the gluing maps $g_k$ must glue together nearby edges. Here is a lemma that makes this precise.

\begin{lem} \label{nearmoves} For any $\delta > 0$, there is a $D_\delta$ so that the following holds. Suppose that $\vec g \in GS(f)$ is a gluing story and that $X(\vec g)$ is a triangulated surface with $V(X)$ vertices and $H(X)$ connected components. Suppose that $N$ of the gluing steps $g_k$ glue together edges that are $D_\delta$-near.
Then

$$ V(X) \le (1 + \delta) N + H(X). $$

In particular, if $X(\vec g)$ is a connected triangulated surface, then

$$ V(X) \le (1 + \delta) N + 1. $$

\end{lem}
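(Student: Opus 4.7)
The approach is to track a potential $\Phi_k = V_{\mathrm{int}}(X_k) + B(X_k) - H(X_k)$, where $V_{\mathrm{int}}$ counts interior (non-boundary) vertices of $X_k$, $B$ counts the connected components of $\partial X_k$ (``boundary loops''), and $H$ counts the connected components of $X_k$. We have $\Phi_0 = 0 + f - f = 0$, and since $X(\vec g)$ is closed, $\Phi_{3f/2} = V(X) + 0 - H(X) = V(X) - H(X)$. Thus the claim amounts to showing $\sum_k \Delta \Phi_k \leq (1+\delta)N$.

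The first main step is a detailed per-step case analysis of the gluing $g_{k+1}$, partitioning by whether the glued edges $e, e'$ lie in the same or different components of $X_k$, whether they share $0$, $1$, or $2$ vertices, and which of the two simplicial isomorphisms $e \to e'$ is used. The crucial input is that $X(\vec g)$ is a triangulated surface: every boundary loop of $X_k$ has length at least $2$, and every boundary vertex has at least two boundary incidences, so a boundary vertex becomes interior only if all of its boundary incidences are absorbed by the gluing. A short calculation shows $\Delta \Phi_k \in \{-1, 0, +1\}$ in every case, with $\Delta \Phi_k = +1$ occurring in exactly three scenarios: (A) an \emph{adjacent collapse}, where two adjacent edges of a single boundary loop are glued and the shared vertex becomes interior; (B) a \emph{same-loop split}, where two non-adjacent edges of one loop are glued via the isomorphism that splits the loop into two; and (C) a \emph{bigon identification}, where the two edges of a length-$2$ loop are glued trivially and both of its vertices become interior. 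Types (A) and (C) involve edges sharing at least one vertex, so they are $1$-near and thus counted in $N$.

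The main work is controlling type (B). Writing $S = S_{\mathrm{near}} + S_{\mathrm{far}}$ for the split of type-(B) events by boundary-graph distance $\leq D_\delta$ vs.\ $> D_\delta$, the near part is already counted in $N$. The key geometric observation is that any far type-(B) gluing splits a boundary loop of length at least $2D_\delta + 2$ into two offspring loops, each of length at least $D_\delta$. These offspring must be fully consumed before we reach the closed surface $X(\vec g)$, and every reduction of a loop's length is effected by a $1$-near operation (adjacent collapse, bigon identification, or merger with another loop). A charging argument traces the ancestry of each loop and assigns to every far swap a disjoint block of at least $\lceil (D_\delta - 2)/2 \rceil$ subsequent $1$-near gluings used in whittling down its offspring. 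This yields $S_{\mathrm{far}} \leq 2N/D_\delta$, so choosing $D_\delta = \lceil 2/\delta \rceil$ gives $S_{\mathrm{far}} \leq \delta N$. Combining, and using that $\Delta \Phi_k = -1$ events only decrease the sum,
$$V(X) - H(X) = \sum_k \Delta \Phi_k \leq N + S_{\mathrm{far}} \leq (1+\delta)N,$$
which is equivalent to the claimed bound.

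The hard part will be making the charging argument rigorous. Offspring of a single far swap can split further via later swaps, merge with unrelated loops via same- or different-component mergers, and produce intermediate bigons along the way; one must carefully verify that the blocks of $1$-near gluings attributed to distinct far swaps remain disjoint, and that the occasional $\Delta \Phi_k = -1$ events do not break the bookkeeping. The pseudomanifold (not necessarily simplicial) structure of the intermediate $X_k$---which may allow boundary vertices with more than two boundary incidences and other mild singularities---adds further care to the per-step case analysis.
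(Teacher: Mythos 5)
Your potential function $\Phi_k = V_{\mathrm{int}}(X_k) + B(X_k) - H(X_k)$ has the same telescoping structure as the paper's (both start at $0$ and end at $V(X)-H(X)$), and your per-step classification of $\Delta\Phi_k$ into types is essentially the paper's A/B/C classification of gluings. But you use an unweighted loop count for $B$ and all components for $H$, whereas the paper's $F(X_k) = V_{\mathrm{int}}(X_k) - H_{\mathrm{cl}}(X_k) + B(X_k)$ counts only closed components and uses a carefully weighted boundary term: each boundary loop of length $l$ contributes $\beta(l) = \max(0,\, 1 - \delta\log_{10} l)$, and the longest loop of each connected component is excluded. That weighting is the whole point: it is engineered so that $F$ \emph{never} increases on a $D_\delta$-far gluing and increases by at most $1+\delta$ on a near one, and the lemma then follows by telescoping with no global bookkeeping at all. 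Your unweighted $\Phi$, by contrast, jumps by $+1$ on a far same-loop split, which forces you into the charging argument you flag as the hard part.

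That charging argument cannot work as stated, and I don't think it can be repaired within this framework. The claim that ``every reduction of a loop's length is effected by a $1$-near operation (adjacent collapse, bigon identification, or merger with another loop)'' is false on two counts. A merger glues edges lying in two \emph{distinct connected components} of $\partial X_k$, which are at infinite adjacency distance and so are always $D_\delta$-far; and a far type-A gluing of two non-adjacent edges on the same loop, in the non-splitting orientation, shortens that loop by $2$ while being $D_\delta$-far and having $\Delta\Phi = 0$. So the two offspring of a far split can be fully whittled down without a single near move. Concretely, a far split immediately followed by a far merge of its two offspring produces $\Delta\Phi = +1$ followed by $\Delta\Phi = -1$, with neither move contributing to $N$, and such pairs can be iterated in gluing stories for high-genus surfaces with few vertices, making the number of far splits unbounded relative to $N$. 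Since your final inequality discards all the $-1$ terms (``$\le N + S_{\mathrm{far}}$''), those cancellations are lost, and the needed bound $S_{\mathrm{far}} \le \delta N$ is simply not true in general even though the lemma itself is. The paper's fix is precisely to make the potential insensitive to far moves, so that the cancellation is encoded term-by-term in $\beta$ rather than recovered afterward by a global charging scheme.
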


First we prove that Lemma \ref{nearmoves} implies Lemma \ref{gluecount}.

\begin{proof}[Proof of Lemma \ref{gluecount} assuming Lemma \ref{nearmoves}.]
 The point of the proof is that there are not very many ways to glue two nearby edges of $\partial X_k$. For any $X_k$, there are at most
$12 D_\delta f$ gluing maps $g_{k+1}$ between edges that are $D_\delta$ near. This is because there are at most $3f$ edges for the domain of $g_{k+1}$. Then there are at most $2 D_\delta$ edges that are $D_\delta$ near to the first edge. Then there are at most two gluings from the first edge to the second edge. There are at most $2 (3f)^2 = 18 f^2$ possible gluing maps $g_{k+1}$. Suppressing the constants, the number of possible gluing maps between nearby edges is $\le C_\delta f$, and the total number of possible gluing maps is $\le C f^2$.

The number of gluing stories in $GS(f)$ with exactly $N$ $D_\delta$-near moves is at most

\begin{align*}
{3f/2 \choose N} (C_\delta f)^N (C f^2)^{(3f/2) - N} &\le (3f/2)^N (N!)^{-1} (C_\delta f)^N (C f^2)^{(3f/2) - N} \\
& \le C_\delta^f f^{3f} (N!)^{-1} \\
&\le C_\delta^f f^{3f} N^{-N}. \\
\end{align*}

In other words:

\begin{equation} \label{nearmovesbound}
| \{ \vec g \in GS(f) | \vec g \textrm{ has exactly } N D_\delta\textrm{-near moves} \} | \le C_\delta^f f^{3f} N^{-N}.
\end{equation}

Let $GS_w(f) \subset GS(f)$ be the set of gluing stories $\vec g \in GS(f)$ so that $X(\vec g)$ is a connected triangulated surface with $w$ vertices. Lemma \ref{nearmoves} implies that for any $\vec g \in GS_w(f)$, the number of $D_\delta$-near gluing maps in $\vec g$ is at least $\frac{w-1}{1+\delta}$. By equation \ref{nearmovesbound}, we see that

$$ |GS_w(f)| \le \sum_{N \ge \frac{w-1}{1+ \delta}} C_\delta^f f^{3f} N^{-N} \le C_\delta^f f^{3f} w^{-\frac{w}{1 + \delta}}.$$

Since $\delta > 0$ is arbitrary, this proves Lemma \ref{gluecount}. \end{proof}

Remark. We will give below a short proof that $V(X) \le 2 N$. This bound is not strong enough to prove Lemma \ref{gluecount}. Using the bound $V(X) \le 2N$ in place of Lemma \ref{nearmoves} in the argument above leads to the estimate $|T(f,w)| \le C^f f^{f/2} w^{-w/2}$. This bound is much weaker than Theorem \ref{triangcount} when $w$ is large. In Lemma \ref{nearmoves}, in the bound $V(X) \le (1 + \delta) N + H(X)$, it is important to get the right constant in front of the $N$.

Next we discuss why the number of connected components plays a role in Lemma \ref{nearmoves}. Consider gluing together a tetrahedron from four faces, $\Delta_0, \Delta_1, \Delta_2, \Delta_3$. For the first three moves, we attach $\Delta_1, \Delta_2,$ and $\Delta_3$ to the three edges of $\Delta_0$. These steps are gluings between edges in different components. Then we do three more gluings and get a tetrahedron. The last three gluings connect nearby edges. If $f$ is a multiple of 4, we can repeat this procedure $f/4$ times to get $f/4$ tetrahedra. The $f/4$ tetrahedra have $V = f$ vertices. In this story, the number of gluings between nearby edges is $N = 3f/4$. So in this example $V = (4/3) N$, which is too large. But this example also has $H = f/4$ connected components. So we see that $V = N + H$. This example shows that we need to include the number of connected components in our estimate.

Next we classify gluing moves according to how they affect the number of internal vertices of $X_k$. Before we do this, it is helpful to remark that if $X(\vec g)$ is a triangulated surface, then every edge of every $X_k$ must have two different endpoints.

Suppose that $g_{k+1}$ is a gluing map from $e_1$ to $e_2$. If $e_1$ and $e_2$ share no vertices, we say that $g_{k+1}$ is a gluing of type $A$. In this case, the gluing $g_{k+1}$ creates no new internal vertices: $V_{int}(X_{k+1}) = V_{int}(X_k)$. If $e_1$ and $e_2$ share exactly one vertex, we say that $g_{k+1}$ is a gluing of type $B$. In this case, $g_{k+1}$ creates one new internal vertex: $V_{int}(X_{k+1}) = V_{int}(X_k) + 1$. It can also happen that $e_1$ and $e_2$ share two vertices! In other words, $e_1$ and $e_2$ are both edges between the same two vertices $v, v'$. In this case, we say that $g_{k+1}$ is a gluing of type $C$, and we note that $V_{int}(X_{k+1}) = V_{int}(X_k) + 2$.

The gluing maps of type $C$ are crucial in our story, so we take a moment to describe an example. Such examples can occur even when $X(\vec g)$ is a triangulated surface. Suppose that $f=6$ so that the gluing story has $3f/2 = 9$ moves.  The boundary of $X_7$ has four edges. Suppose that $X_{7}$ has one boundary component which consists of four edges. (This is not difficult to arrange.) Next suppose that $g_8$ glues together two adjacent edges of this boundary.
So $g_8$ is a gluing map of type $B$. Now $X_8$ has one boundary component consisting of two edges. The gluing map $g_9$ must glue together these two edges. So $g_9$ is a gluing map of type $C$. Notice that the number of vertices of $X(\vec g)$ is $V_{int}(X_9) = V_{int}(X_8) + 2$.

The number of vertices of $X(\vec g)$ is $V_{int}(X_{3f/2}) = B + 2C$, where we write $B$ for the number of type $B$ gluing maps in $\vec g$, and similarly for $C$. Gluing maps of type $B$ or $C$ are 1-near, and so we see that
$V(X) \le 2 N$. As we discussed above, this estimate is not strong enough to prove Theorem \ref{triangcount}. We need to be more careful in how we deal with gluing maps of type $C$.

Now we begin the rigorous proof of Lemma \ref{nearmoves}:

\begin{proof}[Proof of Lemma \ref{nearmoves}.] Here is the frame of the proof. We set $D_\delta = 10^{1 / \delta}$. We will define some function $F(X_k)$ and check the following properties.

\begin{enumerate}

\item $F(X_0) = 0$.

\item $F(X_{3f/2}) = V(X) - H(X)$.

\item If $g_{k+1}$ glues together two $D_\delta$-near edges, then

$$F(X_{k+1}) \le F(X_k) + 1 +\delta.$$

If $g_{k+1}$ glues together two edges which are not $D_\delta$-near, then

$$F(X_{k+1}) \le F(X_k).$$

\end{enumerate}

Given these properties, it is easy to finish the proof of the lemma.  By Property 2, $V(X) - H(X) = F(X_{3f/2})$. By Property 3, $F(X_{3f/2}) \le (1 + \delta) N + F(X_0)$. By Property 1, this is equal to $(1 + \delta)N$. So all together, we have $V(X) - H(X) \le (1+\delta) N$. Hence $V(X) \le (1 + \delta) N + H(X)$.

The main difficulty is to craft a function $F$ that obeys these properties. The function $F(X_k)$ will be the number of internal vertices of $X_k$ plus some other terms. Before writing down the detailed formula, we try to motivate these other terms. A gluing of type $C$ increases the number of internal vertices by 2, and it gets rid of a boundary component of length 2. Since $F$ is only allowed to increase by $1+\delta$, we decide that a boundary component of length 2 should contribute approximately $1 - \delta$ to $F$. Now a gluing of type $C$ only increases $F$ by $1 + \delta$.

But this patch creates new issues. For instance, if we glue together two edges in a boundary component of length 6, we can get two boundary components of length 2. The two boundary components of length 2 contribute approximately $2 - 2 \delta$ to $F$. Since $F$ is only allowed to increase by $1+\delta$, we decide that a boundary component of length 6 should contribute approximately $1 - 3 \delta$ to $F$. In general, a boundary component of length $l$ contributes $\beta(l)$ to $F$, where $\beta(l)$ decreases slowly to zero. Note that if we glue together two edges that are $D_\delta$ far apart, we can create a new boundary component of length $D_\delta$. When we glue far apart edges, $F(X_k)$ cannot increase at all, so we have to arrange that $\beta(l) = 0$ for $l \ge D_\delta$.

But this scheme leads to another issue. The initial configuration $X_0$ has many boundary components of length 3. We want $F(X_0) = 0$. To fix this problem, we only count some of the boundary components. More precisely, for each component $X_k' \subset X_k$, we don't include the boundary contribution from the longest boundary component of $X_k'$. In particular, each component of $X_0$ has only a single boundary component and so the boundary contribution of $X_0$ is zero, as desired.

This modification creates yet another small issue. Suppose that we do a gluing move of type $C$ on a boundary component of length 2 which is the only boundary component of some component $X_k' \subset X_k$. The boundary component of length 2 no longer contributes to $F(X_k)$, and we still create two new internal vertices. In this situation, we also create a new closed connected component of $X_{k+1}$. (A connected component $X_k' \subset X_k$ is closed if it has no boundary.) We decide that each closed component of $X_k$ contributes $-1$ to $F(X_k)$. In this situation, the number of internal vertices goes up by two, but the number of closed components goes up by 1, and so $F$ increases by only 1.

With this motivation, we are ready to give the precise definition of $F(X_k)$ and check all of the properties.
The function $F(X_k)$ is a sum of three terms:

$$ F(X_k) = V_{int} (X_k) - H_{cl}(X_k) + B(X_k), $$

\noindent The terms are as follows. $V_{int}$ is the number of interior vertices of $X_k$. $H_{cl}(X_k)$ is the number of closed components of $X_k$: the number of components of $X_k$ which are pseudomanifolds without boundary. And $B(X_k)$ is a boundary term involving the lengths of the boundary components of $X_k$.

For $l \ge 1$, we define $\beta(l) = \max (0, 1 - \delta \log_{10} l)$. We have $\beta(l) \ge 0$, with $\beta(l) = 0$ for all $l \ge D_\delta = 10^{1/\delta}$. We see that $\beta(1) = 1$, and we note that $\beta$ is decreasing.

For a connected component $Y \subset \partial X_k$, we let $l(Y)$ denote the length of $Y$ (i.e. the number of edges in $Y$).  For a connected component $X_k' \subset X_k$, we define

$$ l_{max} (X_k') := \max_{Y \textrm{ a conn. compon. of } X_k'} l(Y). $$

For a connected component $X_k' \subset X_k$, we define $B(X_k')$ as follows:

$$ B(X_k') = \left( \sum_{Y \textrm{ a conn. compon. of } X_k'} \beta( l(Y)) \right) - \beta( l_{max}(X_k') ). $$

Finally, we define $B(X_k)$ as a sum of contributions from the connected components:

$$ B(X_k) := \sum_{X_k' \textrm{ a conn. compon. of } X_k} B(X_k'). $$

This finishes the definition of $F(X_k)$, and now we have to check Properties 1-3.

{\bf Property 1.} We know that $X_0$ is a disjoint union of $f$ 2-simplices. It has no interior vertices. It has no closed components. Each component of $X_0$ has a single boundary component of length 3, and so $B(X_0) = 0$. Therefore, $F(X_0) = 0$ proving Property 1.

{\bf Property 2.} It's also easy to analyze $F(X_{3f/2})$. We know that $X_{3f/2} = X (\vec g)$ has no boundary, so the complicated term $B(X_{3f/2})$ vanishes. That leaves $F(X_{3f/2}) = V_{int} (X_{3f/2}) - H_{cl}( X_{3f/2})$. Since $X_{3f/2}$ has no boundary, all its vertices are interior vertices, and all its connected components are closed. Therefore, $F(X_{3f/2}) = V(X) - H(X)$, proving Property 2.

{\bf Property 3.} We have to compare $F(X_{k+1})$ and $F(X_k)$.
We consider separately the cases that $g_{k+1}$ has type $A, B$, or $C$. We begin with type $C$, because it plays such an important role in the problem.

Suppose that $g_{k+1}$ has type $C$. By the definition of type C, the map $g_{k+1}$ glues together two edges that share two vertices. A map of type $C$ is 1-near, so we have to show that $F(X_{k+1}) \le F(X_k) + 1 + \delta$. The map $g_{k+1}$ creates two new interior vertices: $V_{int}(X_{k+1}) = V_{int} (X_k) + 2$. The two edges that are glued together by $g_{k+1}$ form a boundary component $Y$ of length $2$ in $\partial X_k$. Suppose that $Y \subset \partial X_k'$ for a connected component $X_k' \subset X_k$. Now we consider two cases.

\begin{itemize}

\item Suppose that $Y$ is the whole boundary of $X_k'$. In this case, the gluing step does not change the boundary term: $B(X_{k+1}) = B(X_k)$.  Also, the number of closed components increases by 1: $H_{cl}(X_{k+1}) = H_{cl}(X_k) + 1$. Assembling all the terms, we get $F(X_{k+1}) = F(X_k) + 1$.

\item Suppose that $X_k'$ has other boundary components. In this case, the gluing step reduces the boundary term by $\beta(2)$: $B(X_{k+1}) = B(X_k) - \beta(2)$. The number of closed components remains the same. Assembling all the terms, we get $F(X_{k+1}) = F(X_k) + 2 - \beta(2) = F(X_k) + 1 + \delta \log_{10}(2)$.

\end{itemize}

Next, we suppose that $g_{k+1}$ has type $B$. The map glues together two adjacent edges in a boundary component of some length $l \ge 3$. A gluing of type $B$ is 1-near, so again we have to prove that $F(X_{k+1}) \le F(X_k) + 1 + \delta$. The gluing creates one new interior vertex. It does not change the number of closed components. The boundary term may increase by at most $\beta(l-2) - \beta(l) \le \delta \log_{10} \frac{l}{l-2} \le \delta \log_{10} 3 \le \delta$. Therefore, $F(X_{k+1}) \le F(X_k) + 1 + \delta$.

Finally we suppose that $g_{k+1}$ has type $A$. In this case, the number of interior vertices and the number of closed components do not change, so we only need to analyze the boundary term.
A gluing map of type $A$ may or may not be $D_\delta$-near. The type A case has a number of sub-cases as follows.

\begin{enumerate}

\item The map $g_{k+1}$ glues together two edges in the same component of $\partial X_k$.

\item The map $g_{k+1}$ glues together two edges in different components of $\partial X_k$, but in the same component of $X_k$.

\item The map $g_{k+1}$ glues together two edges in different components of $X_k$.

\end{enumerate}

We first consider Case 1: the map $g_{k+1}$ glues together two edges in the same component of $\partial X_k$. Suppose that this boundary component has length $l \ge 3$. After the gluing, depending on the orientation of the gluing, the boundary component of length $l$ either becomes two boundary components of lengths $l_1, l_2 \ge 2$ where $l_1 + l_2 +2 = l$, or else it becomes one boundary component of length $l-2$. The most interesting case is when the boundary component splits into two components of lengths $l_1$, $l_2$. We discuss this case first. We begin by noting that $l \ge \max(l_1, l_2)$.

We let $X_k'$ be the component of $X_k$ that contains the edges where $g_{k+1}$ acts. We let $X'_{k+1}$ be the corresponding component of $X_{k+1}$. Now the change in the boundary term is

\begin{equation} \label{boundchange1}
B(X_{k+1}) - B(X_k) = - \beta(l) + \beta(l_1) + \beta(l_2) + \beta (l_{max}(X'_k)) - \beta( l_{max} (X'_{k+1}). \end{equation}

If the gluing $g_{k+1}$ is $D_\delta$-far, then $l_1, l_2, l$ are all at least $D_\delta$, and hence $l_{max}(X'_k)$ and $l_{max}(X'_{k+1})$ are also at least $D_\delta$. Therefore, all the terms on the right-hand side of equation \ref{boundchange1} vanish.

If the gluing $g_{k+1}$ is $D_\delta$-near, then we note that $l_{max}( X'_{k+1}) \le l_{max}(X'_k)$. Therefore,

$$ B(X_{k+1}) - B(X_k) \le - \beta(l) + \beta(l_1) + \beta(l_2). $$

This increase is acceptable by the following lemma:

\begin{lem} Suppose that $l_1, l_2 \ge 2$ and $l \ge 3$ are integers with $l_1 + l_2 + 2 = l$. Then
$\beta(l_1) + \beta(l_2) - \beta(l) \le 1+ \delta$.
\end{lem}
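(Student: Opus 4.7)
The plan is a short case analysis keyed to the threshold $D_\delta = 10^{1/\delta}$ at which $\beta$ vanishes. Assume without loss of generality that $l_1 \ge l_2$; since $l_2 \ge 2$, we have $l = l_1 + l_2 + 2 > l_1 \ge l_2 \ge 2$, so we can split into three cases depending on where these numbers sit relative to $D_\delta$.

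The easy case is $l_1 \ge D_\delta$. Then both $\beta(l_1) = 0$ and $\beta(l) = 0$ (since $l > l_1$), so the left-hand side equals $\beta(l_2) \le \beta(2) < 1 \le 1 + \delta$.

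Next consider $l < D_\delta$, so all three values of $\beta$ are positive and equal to $1 - \delta \log_{10}(\cdot)$. A direct substitution gives
$$\beta(l_1) + \beta(l_2) - \beta(l) = 1 + \delta \log_{10}\!\left(\frac{l}{l_1 l_2}\right).$$
The key inequality is $l/(l_1 l_2) \le 3/2$, which follows from
$$\frac{l_1 + l_2 + 2}{l_1 l_2} = \frac{1}{l_1} + \frac{1}{l_2} + \frac{2}{l_1 l_2} \le \frac{1}{2} + \frac{1}{2} + \frac{1}{2},$$
using $l_1, l_2 \ge 2$. Hence the sum is bounded by $1 + \delta \log_{10}(3/2) < 1 + \delta$.

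The only case that genuinely uses the specific choice of $D_\delta$ is the intermediate one: $l \ge D_\delta > l_1$, so $\beta(l) = 0$ while both $\beta(l_i)$ may be positive. The desired inequality $2 - \delta \log_{10}(l_1 l_2) \le 1 + \delta$ amounts to showing $\log_{10}(l_1 l_2) \ge 1/\delta - 1$, i.e., $l_1 l_2 \ge D_\delta/10$. From $l_2 \ge 2$ and $l_1 \ge (l-2)/2 \ge (D_\delta - 2)/2$, we obtain $l_1 l_2 \ge 2 l_1 \ge D_\delta - 2$, which exceeds $D_\delta/10$ whenever $D_\delta \ge 3$ (true for any $\delta < 1$). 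I expect the only real subtlety to be this final bookkeeping step: verifying that $D_\delta = 10^{1/\delta}$ is tuned so the elementary product bound $l_1 l_2 \gtrsim D_\delta$ translates into exactly $(1-\delta)/\delta$ on the logarithmic scale. The other two cases are mechanical.
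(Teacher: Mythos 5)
Your proof is correct: the three-case decomposition keyed to $D_\delta$ gives the right answer, and the bookkeeping in each case checks out. (One small loose end: your intermediate case verifies the bound only for $\delta < 1$; you should note that for $\delta \ge 1$ the conclusion is trivial since $\beta(l_1) + \beta(l_2) \le 2 \le 1+\delta$.) The paper, however, gets away with only two cases, and the difference is worth noticing. The paper's observation is that $\beta(l) \ge 1 - \delta\log_{10} l$ holds \emph{unconditionally}, because $\beta$ is defined as a maximum with $0$. So once $\beta(l_1)$ and $\beta(l_2)$ are both positive, one writes
\begin{align*}
\beta(l_1) + \beta(l_2) - \beta(l) &\le (1 - \delta\log_{10} l_1) + (1 - \delta\log_{10} l_2) - (1 - \delta\log_{10} l) \\
&= 1 + \delta\bigl(\log_{10} l - \log_{10} l_1 - \log_{10} l_2\bigr),
\end{align*}
regardless of whether $\beta(l)$ is $0$ or positive, and then finishes with $l_1 l_2 \ge l_1 + l_2 = l - 2$ and $\log_{10}\!\left(\tfrac{l}{l-2}\right) \le 1$. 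This collapses your Cases 2 and 3 into a single step and never invokes the explicit value $D_\delta = 10^{1/\delta}$. The upshot is that your remark about the intermediate case being ``the only case that genuinely uses the specific choice of $D_\delta$'' is really an artifact of your decomposition rather than of the lemma: on the paper's route, the tuning of $D_\delta$ is entirely absorbed into the general pointwise inequality $\beta \ge 1 - \delta\log_{10}$, and no fine bookkeeping against the threshold is needed.
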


\begin{proof} We can assume that $l_1 \le l_2$. Since $l_1, l_2 \ge 2$, we have $l \ge l_1, l_2$. If $\beta(l_2) = 0$, then $\beta(l_1) + \beta(l_2) - \beta(l) \le \beta(l_1) \le 1$. So we can assume that $\beta(l_1)$ and
$\beta(l_2)$ are positive. Then we get

\begin{align*}
\beta(l_1) + \beta(l_2) - \beta(l) &\le (1 - \delta \log_{10}(l_1) ) + ( 1 - \delta \log_{10}(l_2)) - (1 - \delta \log_{10} (l)) \\
& = 1 + \delta \left( \log_{10} (l) - \log_{10}(l_1) - \log_{10}(l_2) \right).\\
\end{align*}

It suffices to prove that the expression in parentheses is $\le 1$.
Since $2 \le l_1 \le l_2$, we have $l_1 l_2 \ge l_1 + l_2$ and so
\begin{align*}
\log_{10} (l_1) + \log_{10}(l_2) & \ge \log_{10} (l_1 + l_2) \\
& = \log_{10} ( l-2).
\end{align*}
So we have
\begin{align*}
\log_{10} (l) - \log_{10}(l_1) - \log_{10}(l_2) & \le \log_{10}(l) - \log_{10}(l-2) \\
&= \log_{10} (\frac{l}{l-2})\\
&\le \log_{10} 3 \\
& \le 1.
\end{align*}
\end{proof}

Now we turn to the simpler possiblity that $g_{k+1}$ glues together two edges of a boundary component of length $l$, turning it into a single component of length $l-2$. (The orientation of the gluing map determines whether the boundary component of length $l$ splits into two boundary components or remains a single connected component of the boundary of $X_{k+1}$.)
In this case,

\begin{align*}
B(X_{k+1}) - B(X_k) &= - \beta(l) + \beta(l-2) + \beta (l_{max}(X'_k)) - \beta( l_{max} (X'_{k+1}) \\
& \le - \beta(l) + \beta(l-2).\\
\end{align*}

If $g_{k+1}$ is $D_\delta$-far, then all the terms vanish. If $g_{k+1}$ is $D_\delta$-near, then
$- \beta(l) + \beta(l-2) \le \delta \log_{10} \frac{l}{l-2} \le \delta$.

This finishes the analysis of Case 1, and now we turn to Case 2: the map $g_{k+1}$ glues together two edges in different components of $\partial X_k$, but in the same component $X_k' \subset X_k$.
Suppose that the two components in $\partial X_k'$ have lengths $l_1, l_2$ and the new component in $\partial X_{k+1}'$ has length $l_3 = l_1 + l_2 - 2 \ge \max(l_1, l_2)$. Note that in Case 2, the gluing map is automatically $D_\delta$ far, and so we have to prove that $B(X_{k+1}) \le B(X_k)$.
We expand

\begin{equation} \label{boundchange2} B(X_{k+1}) - B(X_k) = - \beta(l_1) - \beta(l_2) + \beta(l_3) + \beta( l_{max} (X_k') ) - \beta ( l_{max} (X_{k+1}') ). \end{equation}

We note that

$$ l_{max} (X_{k+1}') = \max( l_{max} (X_k'), l_3 ). $$

In the first case, the two $l_{max}$ terms cancel in equation \ref{boundchange2}, leaving

$$ B(X_{k+1}) - B(X_k) = - \beta(l_1) - \beta(l_2) + \beta(l_3) \le - \beta(l_1) \le 0. $$

In the second case, the $l_{max} (X_{k+1}')$ term and the $l_3$ term cancel in equation \ref{boundchange2} leaving

$$ B(X_{k+1}) - B(X_k) = - \beta(l_1) - \beta(l_2) + \beta( l_{max} (X_k') ) \le - \beta(l_1) \le 0.$$

This finishes the analysis of Case 2, and now we turn to Case 3: the map $g_{k+1}$ glues together two edges in different components of $X_k$, say $X_k'$ and $X_k''$. The map $g_{k+1}$ glues together an edge from a component of $X_k'$ with length $l_1$ and an edge from a component of $X_k''$ with length $l_2$. After the gluing, $X'_{k}$ and $X''_{k}$ have merged into one component $X_{k+1}''' \subset X_{k+1}$, and the boundary components of lengths $l_1$ and $l_2$ have merged into one component of length $l_3 = l_1 + l_2 -2$. We note as above that $l_3 \ge \max( l_1, l_2)$.
In Case 3, the gluing map $g_{k+1}$ is automatically $D_\delta$-far, and so we have to prove that $B(X_{k+1}) \le B(X_k)$.

We expand $B(X_{k+1}) - B(X_k)$ to get

\begin{equation} \label{boundchange3} - \beta(l_1) - \beta(l_2) + \beta(l_3) + \beta( l_{max} (X_k') ) + \beta (l_{max} (X_k'')) - \beta ( l_{max} (X'''_{k+1}) ). \end{equation}

We note that

$$ l_{max} (X'''_{k+1}) = \max \left( l_{max} (X_k'), l_{max} (X_k''), l_3 \right). $$

The first two cases are equivalent. If $l_{max} (X'''_{k+1}) = l_{max} (X'_k)$, then those two terms cancel from equation \ref{boundchange3}, leaving $B(X_{k+1}) - B(X_k) = $

$$ - \beta(l_1) - \beta(l_2) + \beta(l_3) + \beta (l_{max} (X_k'')) \le - \beta(l_1) + \beta(l_3) \le 0. $$

On the hand, if $l_{max} (X'''_{k+1}) = l_3$, then those two terms cancel from equation \ref{boundchange3}, leaving $B(X_{k+1}) - B(X_k) = $

$$ - \beta(l_1) + \beta( l_{max} (X_k') ) - \beta(l_2) + \beta (l_{max} (X_k'')) \le 0. $$

This finishes the analysis of Case 3. We have now checked Properties 1-3, finishing the proof of Lemma \ref{nearmoves}. \end{proof}

There are a number of open questions about counting surfaces with various restrictions related to the material in this section. First of all, it would be interesting to find upper and lower bounds for $|T(f,w)|$ that are as close together as possible. It would also be interesting to estimate the number of connected pseudomanifolds with $f$ faces and $w$ vertices, up to combinatorial equivalence. Finally, it would be interesting to consider generalizations to higher dimensions. There are several variations in higher dimensions. From the point of view of studying the $d$-dimensional girths of $d$-dimensional complexes, it would be helpful to estimate the number of connected $d$-dimensional simplicial complex pseudomanifolds with $f$ $d$-faces and $w$ vertices. (A simplicial complex pseudomanifold is a pseudomanifold which is also a simplicial complex.) It would also be interesting to estimate the number of connected $d$-dimensional pseudomanifolds with $f$ faces and $w$ vertices.

\subsection{Background on pseudomanifolds} \label{subsecbackpseudo}

In this section, we provide background on pseudomanifolds. We recall the definition of a pseudomanifold and a pseudomanifold with boundary. We will see that the spaces $X_k(\vec g)$ are all pseudomanifolds with boundary. We will define vertices, edges, faces, and connected components of a pseudomanifold. We will see that the boundary of a $d$-dimensional pseudomanifold with boundary is itself a $(d-1)$-dimensional pseudomanifold (without boundary). As a result, we will see that the boundary of $X_k(\vec g)$ consists of finitely many components and that each components is a loop with at least one edge. Finally, if $X(\vec g)$ is a triangulated surface, then we will check that each component of the boundary of $X_k(\vec g)$ contains at least two edges.

A $d$-dimensional pseudomanifold is made by gluing together $d$-dimensional simplices. Suppose that $\Delta_1, \Delta_2, ..., \Delta_f $ are copies of the standard (closed) $d$-simplex. (In this paper, we always work with finite pseudomanifolds, made from finitely many simplices.)

We glue facets of these $\Delta_j$ together using simplicial isomorphisms. Each $\Delta_j$ has $d+1$ facets, which we label as $\Delta_{j,a}$ with $a = 0, ..., d$. A gluing is defined by specifying two (different) facets,
$\Delta_{j,a}$ and $\Delta_{j', a'}$ and giving a simplicial isomorphism from $\Delta_{j,a}$ to $\Delta_{j', a'}$. (Technical remark. Gluing a facet $\Delta_{j,a}$ to itself is not allowed. But gluing a facet $\Delta_{j,a}$ to another facet of the same simplex, $\Delta_{j,a'}$, is allowed.)

A pseudomanifold is specified by a set of gluings where each facet of the $\Delta_j$ is involved in exactly one gluing. Recalling the definition of a gluing story, it follows immediately that for any gluing story $\vec g$, $X(\vec g)$ is a 2-dimensional pseudomanifold.

A pseudomanifold with boundary is specified by a set of gluings where each facet of the $\Delta_j$ is involved in at most one gluing. It follows that for each $\vec g \in GS(f)$ and each $k$, $X_k(\vec g)$ is a pseudomanifold with boundary.

A pseudomanifold (possibly with boundary) leads to an underlying topological space by identifying any points that have been glued together. The set of points in the pseudomanifold is formally defined as follows. We begin with the union of the simplices $\Delta_1, \Delta_2, ...$ If a gluing map takes one point to another point, then those points are equivalent. These equivalences generate equivalence classes. A point of the pseudomanifold is an equivalence class of the original points.

We can define the $k$-dimensional faces of a pseudomanifold (possibly with boundary) in a similar way. We begin with the set of $k$-dimensional faces of the simplices $\Delta_j$. Two $k$-dimensional faces are equivalent if one of our gluing maps maps one of them onto the other. These equivalences generate an equivalence relation on the set of $k$-faces. A $k$-face of the pseudomanifold is an equivalence class for this relation. In particular, we can define the vertices of a pseudomanifold as the 0-dimensional faces. Two $d$-dimensional faces can never be glued together, so the $d$-faces of a $d$-dimensional pseudomanifold are just the original $d$-dimensional simplices $\Delta_1, \Delta_2,$...

For example, a 1-dimensional pseudomanifold without boundary is a finite collection of circles, where each circle is made from some number of intervals connected end-to-end. A 1-dimensional pseudomanifold can be made from a single interval with its two boundary points glued together. So each circle can have any number of edges $\ge 1$.

Next we define connected pseudomanifolds. Two simplices $\Delta_j$ and $\Delta_{j'}$ are adjacent if two of their facets have been glued together. We say $\Delta_{j}$ is connected to $\Delta_{j'}$ if there is a sequence of adjacent simplices starting with $\Delta_j$ and ending with $\Delta_{j'}$. We say that a pseudomanifold is connected if every two simplices are connected. Any pseudomanifold (possibly with boundary) is a finite union of disjoint connected pseudomanifolds, its connected components.

Now we define the boundary of a pseudomanifold with boundary. Consider a $d$-dimensional pseudomanifold with boundary, $X$, formed from $d$-simplices $\Delta_1, \Delta_2, ...$ by some gluing maps $g_1, g_2, ...$ A facet $\Delta_{j,a}^{d-1} \subset \Delta^d_j$ is called a boundary simplex if it is not involved in any of the gluing maps. (We write the exponent $d-1$ to recall that the dimension of $\Delta_{j,a}^{d-1}$ is $d-1$.) It turns out that the boundary simplices form a $(d-1)$-dimensional pseudomanifold without boundary.

If $\Delta_{j,a}^{d-1}$ is a boundary simplex, and $\Delta_{j,a,b}^{d-2} \subset \Delta_{j,a}^{d-1}$ is one of its facets, then we have to prove that the equivalence class of $\Delta_{j,a,b}$ lies in exactly one other boundary simplex. We will describe the equivalence class of $\Delta_{j,a,b}$ in terms of the gluing maps.

The facets of a boundary simplex are $(d-2)$-dimensional. To help study them, we define some notation to describe the $(d-2)$-dimensional facets of a $d$-simplex. Recall that if $\Delta_j$ is one of our $d$-simplices, then its facets are $\Delta_{j,a}$ with $a = 0, ..., d$. Any $(d-2)$-face of $\Delta_j$ is contained in exactly two of the facets of $\Delta_j$. For any $a \not= b$, we write $\Delta_{j,a,b}^{d-2} := \Delta_{j,a}^{d-1} \cap \Delta_{j,b}^{d-1} \subset \Delta_j^d$.

We are considering a boundary simplex $\Delta_{j,a}$ and one of its facets $\Delta_{j,a,b}$.
If $\Delta_{j,b}^{d-1}$ is also a boundary simplex, then the equivalence class of $\Delta_{j,a,b}$ is just $\{ \Delta_{j,a,b} \}$, and it is a facet of $\Delta_{j,a}$ and $\Delta_{j,b}$. In this case, we are done.

Now suppose that $\Delta_{j,b}^{d-1}$ is not a boundary simplex, i.e. it is involved in a gluing. Suppose that $\Delta_{j,b}^{d-1}$ is glued to $\Delta_{j_1, a_1}^{d-1} \subset \Delta^d_{j_1}$. This gluing identifies $\Delta_{j,a,b}^{d-2}$ with a $(d-2)$-face $\Delta_{j_1, a_1, b_1}^{d-2} \subset \Delta_{j_1, a_1}^{d-1}$. As above, we use the convention that $\Delta_{j_1, a_1, b_1}^{d-2} = \Delta_{j_1, a_1}^{d-1} \cap \Delta_{j_1, b_1}^{d-1}$. If $\Delta_{j_1, b_1}$ is a boundary simplex, then the equivalence class of $\Delta_{j,a,b}$ is $\{ \Delta_{j,a,b}, \Delta_{j_1, a_1, b_1} \}$, and it is a facet of exactly two boundary simplices: $\Delta_{j,a}$ and $\Delta_{j_1, b_1}$.

If $\Delta_{j_1, b_1}$ is not a boundary simplex, then it is glued to some $\Delta_{j_2, a_2}$. This gluing map identifies $\Delta_{j,a,b}$ with some $(d-2)$-face $\Delta_{j_2, a_2, b_2}^{d-2} = \Delta_{j_2, a_2}^{d-1} \cap \Delta_{j_2, b_2}^{d-1} \subset \Delta_{j_2}$. The general picture is as follows. For $i = 1, ..., t$, $\Delta_{j_i, b_i}$ is glued to $\Delta_{j_{i+1}, a_{i+1}}$, which identifies $\Delta_{j_i, a_i, b_i}$ with $\Delta_{j_{i+1}, a_{i+1}, b_{i+1}}$. The face $\Delta_{j_t, b_t}$ is another boundary simplex. This procedure has to stop with another boundary simplex, $\Delta_{j_t, b_t}$, because the $(d-1)$-faces $\Delta_{j_i, a_i}$ and $\Delta_{j_i, b_i}$ are all distinct. Now the equivalence class of $\Delta_{j,a,b}$ is exactly $\{ \Delta_{j,a,b}, \Delta_{j_1, a_1, b_1}, ..., \Delta_{j_t, a_t, b_t} \}$. It is the facet of exactly two boundary simplices: $\Delta_{j,a}$ and $\Delta_{j_t, b_t}$.

This finishes our explanation of the structure of the boundary of a pseudomanifold with boundary. In particular, we see that $\partial X_k(\vec g)$ is a 1-dimensional pseudomanifold without boundary. By the classification we described above, $\partial X_k(\vec g)$ is a finite union of circles each containing at least one edge.

Finally, we check that if $X(\vec g)$ is a triangulated surface, then each component of $\partial X_k(\vec g)$ contains at least two edges. Since $X(\vec g)$ is a triangulated surface, each edge of $X(\vec g)$ has two distinct vertices. This implies that each edge of $X_k(\vec g)$ has two distinct vertices. Therefore, each edge of $\partial X_k(\vec g)$ has two distinct vertices, and so each component of $\partial X_k(\vec g)$ contains at least two edges.

\section{Comments} \label{sec:comm}

\subsection{Filling area of cycles in random $2$-complexes}

Over the past ten years or so, the topology of random $2$-complexes has been well studied. This area began with Linial and Meshulam's paper \cite{LM}. The Linial--Meshulam theorem describes the vanishing threshold for homology.

\begin{thm*}[Linial--Meshulam, 2006]
Let $Y = Y(n,p)$.\\

If $$p \ge \frac{ 2 \log n + \omega(1)}{n}$$
then
with high probability $H_1(Y, \Z / 2 \Z) = 0$, and
if $$p \le \frac{2 \log n - \omega(1)}{n},$$
then with high probability $H_1(Y, \Z / 2 \Z) \neq 0$. (Here $\omega(1)$ denotes any function that tends to infinity as $n \to \infty$.)
\end{thm*}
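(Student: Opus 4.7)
The plan is to handle the two inequalities separately. Both use the cohomological reformulation $H_1(Y,\mathbb{Z}/2) \cong H^1(Y,\mathbb{Z}/2)$ (valid since we work over a field), so that vanishing/non-vanishing of $H_1$ becomes a statement about 1-cocycles modulo coboundaries. Crucially, the $1$-skeleton of $Y$ is always $K_n$, so $C^1 = \{\phi\colon E(K_n) \to \mathbb{Z}/2\}$ is the same for every $Y$; the randomness only enters through which triangles impose the cocycle condition $\phi(ab)+\phi(ac)+\phi(bc)=0$.

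First I would do the \emph{subcritical} direction, which is the easier one. The key observation is that if an edge $e = \{u,v\} \in E(K_n)$ is contained in no $2$-face of $Y$, then the indicator $\phi = \mathbb{1}_{e}$ is automatically a cocycle (the cocycle condition is vacuous on any triangle not containing $e$), yet it is not a coboundary because coboundaries on $K_n$ are cuts $\delta\psi$, and for $n \ge 3$ no cut has size $1$. So it suffices to show that some edge lies in no $2$-face w.h.p.\ when $p \le (2\log n - \omega(1))/n$. Let $X$ be the number of such ``isolated'' edges. Then $\mathbb{E}[X] = \binom{n}{2}(1-p)^{n-2} \asymp \tfrac{1}{2}n^2 e^{-pn} \to \infty$ under our hypothesis, and a standard second moment calculation (splitting pairs of edges by whether they share a vertex, so that the covariance terms are controlled by a factor $(1-p)^{-O(1)}$) shows $\operatorname{Var}(X)=o(\mathbb{E}[X]^2)$, giving $X>0$ w.h.p.\ by Chebyshev.

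Second I would do the \emph{supercritical} direction, showing every 1-cocycle is a coboundary w.h.p. In each cohomology class $[\phi] \in H^1$ pick a representative $\phi = \mathbb{1}_A$ of minimum weight $|A|$. By minimality, for every vertex $v$ the number of $v$-incident edges in $A$ is at most $(n-1)/2$, since otherwise replacing $A$ by $A \triangle \delta\{v\}$ would strictly decrease the weight. One then shows that if $[\phi]\neq 0$ there is a set $S\subseteq V$ with $1\le |S|\le n/2$ such that $A = \delta S \triangle R$ with $|R|$ controlled by a parameter $s$, and that the number of triangles of $K_n$ on which $\phi$ sums to $1$ satisfies $T_\phi \gtrsim s\cdot n$. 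Given this, a union bound over non-coboundary classes, stratified by $s$, gives
\[
\Pr[H^1 \ne 0] \;\le\; \sum_{s\ge 1} 2^{n} \binom{\binom{n}{2}}{s}(1-p)^{csn},
\]
which, substituting $p \ge (2\log n + \omega(1))/n$, becomes a geometric series whose first term is $o(1)$, since $n^{s}\cdot n^{-2s}\cdot 2^n = 2^n n^{-s}$ is killed by $e^{-\omega(1)s}$ (absorbing $2^n$ into the $\omega(1)$). Hence $H^1(Y,\mathbb{Z}/2)=0$ w.h.p.

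The main obstacle is the structural/combinatorial step in the supercritical direction: producing the lower bound $T_\phi \gtrsim s\cdot n$ on the number of ``bad'' triangles for minimum-weight representatives far from every cut. The intuition is that the minimality condition $\deg_A(v) \le (n-1)/2$ ensures each edge in the symmetric difference $A \triangle \delta S$ contributes roughly $n/2$ odd triangles (via the $\sim n/2$ vertices on the ``right'' side of $S$), while each odd triangle is overcounted by at most $3$ edges---so the factor $2$ in the Linial--Meshulam threshold $2\log n/n$ emerges precisely from this ratio. Making this rigorous requires a careful vertex-by-vertex argument exploiting the minimality; a naive bound like $T_\phi \ge s$ only yields the far weaker threshold $\log n / n$, so sharpness matters.
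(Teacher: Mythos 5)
This theorem is not proved in the paper: it is quoted verbatim from Linial and Meshulam~\cite{LM} as background, so there is no ``paper's own proof'' to compare against. That said, your sketch can still be evaluated against the known argument, and the two directions fare very differently.

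The subcritical direction is correct. An edge contained in no $2$-face gives a $1$-cocycle that is not a coboundary (the smallest nonempty cut in $K_n$ has $n-1\ge 2$ edges), and the first/second moment computation for the number of isolated edges with $p\le(2\log n-\omega(1))/n$ is routine. This is indeed how the lower bound is established in~\cite{LM}.

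The supercritical direction has a genuine gap, and in fact the bound you wrote down does not tend to zero. You stratify by a pair $(S,R)$ with $A=\delta S\triangle R$, which introduces the factor $2^n$ for the choice of $S$. But if $\phi=\mathbb{1}_A$ is already the minimum-weight representative of its class, then the further decomposition $A=\delta S\triangle R$ is vacuous: $\mathbb{1}_R$ is cohomologous to $\phi$, so $|R|\ge|A|$, and the optimal choice is $S=\emptyset$, $R=A$. Carrying the spurious $2^n$ through the union bound gives a quantity like $\sum_{s\ge 1}2^n\binom{\binom{n}{2}}{s}(1-p)^{csn}$, and even with $c=1$ and $p=(2\log n+\omega(1))/n$ the $s$-th term is roughly $2^n e^{-\omega(1)s}$, whose sum is of order $2^n e^{-\omega(1)}$. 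Since $\omega(1)$ in the theorem is an \emph{arbitrary} function tending to infinity (e.g.\ $\log\log n$), it certainly cannot ``absorb $2^n$''; the bound blows up. Dropping the $2^n$ (i.e.\ summing only over cochains of weight $s$, at most $\binom{\binom{n}{2}}{s}\le n^{2s}$ of them) is the right framework, but then the constant in the structural lemma $T_\phi\ge c\,s\,n$ must be essentially $1$, and you leave this unproven while acknowledging a naive bound only gives $T_\phi\ge s$. Moreover, the constant really cannot be taken to be $1$ uniformly: for a star on $s$ edges centered at one vertex with $s\le (n-1)/2$ one has a minimum-weight representative with exactly $s(n-1-s)$ bad triangles, which is about $sn/2$ for $s$ near $n/2$. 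So the union bound needs a finer stratification than ``by weight'' alone, and the combinatorial lemma requires a genuinely careful argument (this is precisely the technical heart of Linial--Meshulam's paper). As written, your Step 2 would not establish the $2\log n/n$ threshold.
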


A geometric refinement of this picture would be to understand the typical {\it filling area} of cycles, i.e.\ in the typical number of triangles in the minimal bounding chain for a given cycle.  For instance, what is the typical filling area of the length 3 1-cycle $123$ in $Y$?  The Linial-Meshulam theorem gives an upper bound as follows.  Consider the sub complex of $Y$ restricted to the first $s$ vertices.  This sub-complex is chosen according to $Y(s,p)$.  If $p > s^{\eps - 1} > \frac{2 \log s}{s}$, then with high probability the first homology of the sub complex vanishes by the theorem of Linial-Meshulam.  Therefore, the cycle $123$ bounds a 2-chain in this sub complex.  But the number of 2-faces in the sub-complex is at most $C p s^3$ with high probability.  Therefore, with high probability, the filling area of $123$ is at most $C p^{-2 - \eps}$.  If $p = n^{\alpha - 1}$, then with high probability $Y$ has about $n^{2 + \alpha}$ 2-faces and the filling area of $123$ is at most $n^{2 - 2 \alpha + \eps}$.  The techniques of this paper allow one to prove that this upper bound is essentially sharp when $0 \le \alpha < 1/2$.

\begin{thm} \label{thm:fill}
Let $\alpha \in \left[ 0 , \frac{1}{2} \right)$ and $\epsilon > 0$. Suppose $p = n^{\alpha-1}$, so the expected number of faces in $Y(n,p)$ is $n^{2 + \alpha}$. Let $A$ be the filling area of the cycle $123$ in $Y$. Then with high probability
$$n^{2 - 2 \alpha - \epsilon} \le A \le n^{2 - 2 \alpha + \epsilon}.$$
\end{thm}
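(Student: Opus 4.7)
}

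\emph{Upper bound.} The plan is exactly the argument sketched before the theorem statement. Set $s = \lceil n^{1-\alpha+\epsilon/3}\rceil$ and consider the induced subcomplex $Y' \subset Y$ on the vertex set $\{1,\dots,s\}$; clearly $Y' \sim Y(s,p)$ with $p = n^{\alpha-1}$. Since $ps = n^{\epsilon/3}$ dominates $2\log s$, the Linial--Meshulam theorem gives $H_1(Y';\mathbb{F}_2)=0$ w.h.p., so the 1-cycle $123$ already bounds inside $Y'$. By a Chernoff estimate the number of 2-faces of $Y'$ is at most $2p\binom{s}{3} \le n^{2-2\alpha+\epsilon}$ w.h.p., giving the required upper bound on $A$.

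\emph{Reduction of the lower bound to a counting problem.} If $B$ is a minimum-area filling of $123$, then $z := B + [123]$ is a 2-cycle of size $|B|+1$ in the auxiliary complex $Y \cup \{[123]\}$; a short check using minimality of $B$ shows that $z$ is \emph{inclusion-minimal} as a 2-cycle. Conversely, every inclusion-minimal 2-cycle of $Y\cup\{[123]\}$ whose support contains the face $123$ yields a filling of the corresponding size minus one. Hence it suffices to prove that w.h.p.\ no inclusion-minimal 2-cycle of $\Delta_n^{(2)}$ containing the face $\{1,2,3\}$, all of whose other faces lie in $Y$, has $\le F+1$ faces, where $F := n^{2-2\alpha-\epsilon}$.

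\emph{Counting cycles through a marked face.} Let $Z_{123}(f,v)$ denote the set of inclusion-minimal 2-cycles in $\Delta_n^{(2)}$ with vertex support exactly $[v]\supseteq\{1,2,3\}$, $f$ faces, and containing the face $\{1,2,3\}$. Repeating the surface-gluing argument behind Theorems~\ref{thm:exist} and~\ref{triangcount} but requiring one face of the triangulated surface to be labelled as the image of $\{1,2,3\}$ (at most $6f$ choices for the marked face/orientation, and only $v^{w-3}$ simplicial extensions for the remaining $w-3$ vertices) gives
\[
|Z_{123}(f,v)| \;\le\; 6f\,v^{-3}\sum_{w\ge v}|T(f,w)|\,v^{w} \;\le\; C_\delta^{\,f}\,f^{1+f/2}\,v^{\delta f-3},
\]
for any fixed $\delta>0$ and $f,v$ large. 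The expected number of such cycles appearing in $Y\cup\{[123]\}$ is therefore at most
\[
\sum_{f}\sum_{v}\binom{n-3}{v-3}\,|Z_{123}(f,v)|\,p^{f-1},
\]
the factor $p^{f-1}$ arising because the face $\{1,2,3\}$ is adjoined for free.

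\emph{Showing this expectation is $o(1)$.} I would split the sum into the same three regimes used in Section~3: small ($f \le M$ for a constant $M = M(\alpha,\epsilon)$), intermediate ($M < f \le n$), and large ($n \le f \le F+1$). Compared to the sums in Sections~3.2--3.3, the new estimand differs by a factor $\tfrac{\binom{n-3}{v-3}}{\binom{n}{v}}\cdot\tfrac{f}{v^3}\cdot p^{-1} \lesssim f\,n^{-2-\alpha}$, which for $f \le F$ is at most $n^{-\alpha-\epsilon}\to 0$; since the unmodified sums were themselves shown to be $o(1)$, this uniform improvement keeps the modified sums $o(1)$. In the small-$f$ regime the geometric-series bookkeeping has to be done directly: using $v\le f/2+2$ and $p = n^{\alpha-1}$, the term for fixed constant $f\ge 3$ is $O\!\bigl(n^{f(\alpha-1/2)+(1/2-\alpha)}\bigr)$, whose exponent $(2\alpha-1)(f-1)/2$ is strictly negative for $\alpha<1/2$, so each of the finitely many terms is $o(1)$. (The trivial case $f=1$ contributes $p = o(1)$.) Combining the three regimes with a union bound shows that w.h.p.\ no filling of $123$ has area below $n^{2-2\alpha-\epsilon}$, completing the proof.

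\emph{Main obstacle.} The technical heart is the large-$f$ estimate, exactly as in Section~3.3: it is crucial that the exponent of $v$ in the surface count from Theorem~\ref{triangcount} is $\delta f$ (with $\delta$ adjustable), so that after combining $|T(f,w)|$ with the marking factor $v^{-3}$ and the probability $p^{f-1}$, the per-term base can be arranged to be a negative power of $n$ uniformly over the whole intermediate and large ranges. Everything else is either routine Chernoff/union-bound arithmetic or the short algebraic verification that $B+[123]$ is inclusion-minimal.
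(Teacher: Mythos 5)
Your proposal is correct and follows essentially the same route as the paper: the upper bound is the same Linial--Meshulam subcomplex argument, and the lower bound is obtained by passing from fillings of $123$ to inclusion-minimal $2$-cycles containing the face $\{1,2,3\}$ (equivalently, marking one face of the triangulated surface) and then re-running the small/intermediate/large case analysis from Section~3, with the extra factor $\binom{n-3}{v-3}/\binom{n}{v}\cdot p^{-1}$ absorbed exactly as you indicate. The only cosmetic difference is that for small fillings the paper counts subcomplexes with $v$ vertices and $2v-5$ faces containing $\{1,2,3\}$ rather than invoking the marked-surface bound $|Z_{123}(f,v)|$ directly, but the dominant term is the same $O(np^3)=O(n^{3\alpha-2})$ in both treatments.
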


The new part of Theorem \ref{thm:fill} is the lower bound.  The proof of the lower bound is very similar to the proof of Theorem \ref{thm:exist}.  One counts fillings of the 1-cycle $123$ instead of counting 2-cycles, but the counts are closely related since adding a triangle to a filling results in a cycle.  The only difference has to do with small fillings or small cycles.  A small filling of $123$ gives rise to a small inclusion-minimal cycle which contains the vertices $123$.  The number of such cycles in $Y$ goes to zero, although the total number of small inclusion-minimal cycles does not.  Here is a more detailed explanation.

With high probability, $Y$ does not contain the face with vertices 123.  If $c$ is a 2-chain in $Y$ bounded by the cycle $123$ and with a minimal number of 2-faces, then adding the face with vertices 123 to $c$ gives an inclusion-minimal 2-cycle in $\Delta_n^{(2)}$.  

Recall that an inclusion-minimal 2-cycle was called small if it contains at most $M$ vertices for $M= M(\alpha)$ defined in Section \ref{sec:largegirth}.  Similarly, an inclusion minimal filling of $123$ is called small if it contains at most $M$ vertices.  The small cycles in $Y$ were estimated in Section \ref{sec:smallcycles} using barely-dense subcomplexes.  Recall that a barely dense subcomplex of $Y$ is a subcomplex with $v$ vertices and $2v-4$ faces.  Each small inclusion-minimal 2-cycle contains a barely dense subcomplex.  Similarly, an inclusion-minimal small filling of the cycle $123$ must contain a sub complex of $Y$ with $v$ vertices and $2v-5$ faces of $Y$, and containing the vertices $1, 2,$ and $3$.  The expected number of such sub-complexes in $Y$ is 

$$\sum_{v = 4}^{M} {n \choose {v-3}} {{v \choose 3} \choose 2v-4} p^{2v-5}.$$

This sum is decreasing in $v$, and so it is dominated by $M$ times the first term.  Hence the sum is

$$ \le C_{\alpha} n p^3 = C_{\alpha} n^{3 \alpha - 2}. $$

Since $\alpha < 1/2$, this is at most $C_{\alpha} n^{-1/2}$, and so with high probability, there is no small filling of $Y$.  The rest of the proof of Theorem \ref{thm:fill} is the same as the proof of Theorem \ref{thm:exist}.

\subsection{Comparison with earlier work}

Aronshtam, Linial, \L{}uczak, and Meshulam studied the threshold for collapsibility of random $d$-dimensional simplicial complexes in \cite{ALLM13}. In Section 4 they provide an upper bound on $C_d(n,m)$, the number of minimal core $d$-complexes with $n$ vertices and $m$ facets. This is in turn gives an upper bound on the number of inclusion-minimal cycles with $n$ vertices and $m$ facets, since every cycle is a minimal core.

So their estimate can be used to count cycles in random complexes. In the case we are interested in, $d=2$, their estimate implies the following.

\begin{thm} \label{thm:ALLM}
\noindent
\begin{enumerate}
\item For any $C > 0$, for all $n$ sufficiently large, there exist simplicial complexes $\Delta$ with $n$ vertices and with at least $Cn^2$ faces, such that every cycle in $H_2(\Delta)$ is supported on at least $f(C)n^2$ faces.\\

\item For $\alpha > 0$, for all $n$ sufficiently large, there exist simplicial complexes $\Delta$ with $n$ vertices and with at least $n^{2 + \alpha}$ faces, such that every cycle in $H_2(\Delta)$ is supported on at least $C_{\alpha} n^{2 - 16 \alpha}$ faces.
\end{enumerate}
\end{thm}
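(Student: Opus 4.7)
The plan is to imitate the proof of Theorem \ref{thm:exist}, but replacing the authors' Theorem \ref{triangcount} with the ALLM upper bound on $C_2(n,m)$, the number of inclusion-minimal 2-cycles on $n$ vertices with $m$ faces. For part (1), I would choose $p = 6C'/n$ for a constant $C' > C$, so that $\binom{n}{3} p \sim C' n^2$ and Chernoff gives $f_2(Y) \ge Cn^2$ with high probability; for part (2), take $p = n^{\alpha-1}$, exactly as in Theorem \ref{thm:exist}.

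Since each inclusion-minimal 2-cycle $z \subset \Delta_n^{(2)}$ with $f$ faces appears in $Y \in Y(n,p)$ with probability exactly $p^f$, the expected number of inclusion-minimal 2-cycles in $Y$ with exactly $f$ faces is at most $C_2(n,f) p^f$. Let $F$ denote the target girth ($f(C)n^2$ in part (1); $C_\alpha n^{2-16\alpha}$ in part (2)). Summing over $f \le F$ and substituting the ALLM bound on $C_2(n,f)$ should yield
$$\sum_{f \le F} C_2(n,f) p^f = o\!\left(\expect[f_2(Y)]\right).$$
The precise functional form of the ALLM bound dictates the best threshold $F$ for which this holds, and in particular produces the exponent $2-16\alpha$ in part (2); for part (1) the calculation is cleaner because $p$ is only of order $1/n$.

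Once this inequality is in hand, the argument concludes via the deletion step from Section \ref{sec:largegirth}: by Markov's inequality there exists an outcome $Y$ with at most $o(\expect[f_2(Y)])$ inclusion-minimal cycles of size $\le F$, and simultaneously, by Chernoff, $f_2(Y) \ge (1-o(1))\expect[f_2(Y)]$. Deleting one 2-face from each such small inclusion-minimal cycle destroys every 2-cycle of size $\le F$ (because any cycle contains an inclusion-minimal sub-cycle of size at most its own) while losing only a lower-order number of faces. The resulting complex $\Delta$ has the required number of faces and 2-girth at least $F$.

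The main obstacle is verifying that the ALLM bound on $C_2(n,f)$ is sharp enough to deliver a nontrivial $F$, especially the exponent $2-16\alpha$ in part (2). This is essentially a bookkeeping calculation using their explicit estimate from Section 4 of \cite{ALLM13}, but the loss of exponent compared to Theorem \ref{thm:exist} (which achieves $2-2\alpha-\eps$) is attributable entirely to the weakness of the ALLM counting bound relative to Theorem \ref{triangcount}; indeed, the latter is designed precisely to overcome this loss. A secondary technical point is handling very small cycles separately (as in Section \ref{sec:smallcycles}), since the $C_2(n,f) p^f$ sum need not be small for $f$ bounded; the barely-dense-subcomplex trick from there can be ported verbatim.
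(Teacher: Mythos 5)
Your proposal matches the paper's intended (but unwritten) derivation: the paper states Theorem~\ref{thm:ALLM} only as a corollary of substituting the ALLM upper bound on the number of minimal cores $C_2(n,m)$ in place of Theorem~\ref{triangcount} inside the probabilistic deletion framework of Section~\ref{sec:largegirth}, and gives no further details. You correctly identify both the small-cycle caveat (handled via barely-dense subcomplexes, exactly as in Section~\ref{sec:smallcycles}) and the fact that the weaker exponent $2-16\alpha$ traces entirely to the shape of the ALLM counting bound rather than to any structural difference in the argument, so the proposal is sound; the only thing left unverified, which you candidly flag, is the bookkeeping that extracts $f(C)$ and the exponent $2-16\alpha$ from the explicit ALLM estimate, and that is also left implicit in the paper.
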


Part (1) of Theorem \ref{thm:ALLM} is slightly stronger than our Theorem \ref{thm:exist} in the regime $m = \theta \left( n^2 \right)$, and is optimal up to a constant factor by Theorem \ref{thm:syst}. On the other hand, part (2) of this theorem is weaker than Theorem \ref{thm:exist} for $\alpha > 0$, since $$2- 16 \alpha < 2 - 2 \alpha - \epsilon$$ for sufficiently small $\epsilon$.

\subsection{Volume distortion}

By combining Theorem \ref{thm:fill} with a standard probabilistic technique (see for example the proof of Proposition 4.2 in \cite{Dominic-distortion}), another estimate is obtained:

\begin{prop}\label{Weiner-index}
Let $Y = Y(n,p)$ be a random complex with $p = n^{\alpha-1}$. Let $\epsilon >0 $ be given. For each $1$-cycle, $\tau$, of length $3$ (e.g. $123$ as above), let $${\rm Fill}_Y (\tau) = \min \{ \Vert y \Vert \, | \, y \in C_2 (Y; \Z / 2 \Z) \text{  and   } \partial y = \tau \}.$$ Then for sufficiently large $n$ (depending on $\epsilon$) with high probability $$\sum_{\tau \in \binom{n}{3} } \left( {\rm Fill}_Y (\tau) \right)^2 \geq \binom{n}{3} \left( n^{2-2\alpha -\epsilon}\right)^2.$$
\end{prop}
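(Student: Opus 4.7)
The plan is to deduce Proposition \ref{Weiner-index} from Theorem \ref{thm:fill} by exploiting the vertex-transitive symmetry of the $Y(n,p)$ model and then applying Markov's inequality to the count of ``bad'' triangles, which is the standard trick alluded to in the reference.

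First I would observe that the distribution of $Y(n,p)$ is invariant under the action of $S_n$ on vertices. Consequently, for every $\tau \in \binom{[n]}{3}$, the random variable $\mathrm{Fill}_Y(\tau)$ has exactly the same distribution as $\mathrm{Fill}_Y(123)$. Applying the lower-bound half of Theorem \ref{thm:fill} with parameter $\epsilon' := \epsilon/2$, we have
\[
\prob\bigl[\mathrm{Fill}_Y(\tau) < n^{2-2\alpha-\epsilon/2}\bigr] = o(1) \quad \text{as } n \to \infty,
\]
uniformly in $\tau$, since the bound is the same for each $\tau$ by symmetry.

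Next I would define the ``bad'' indicator $X_\tau = \mathbf{1}[\mathrm{Fill}_Y(\tau) < n^{2-2\alpha-\epsilon/2}]$ and let $B = \sum_\tau X_\tau$ count the bad triangles. By linearity of expectation and the uniform bound above,
\[
\expect[B] = \binom{n}{3}\prob\bigl[\mathrm{Fill}_Y(123) < n^{2-2\alpha-\epsilon/2}\bigr] = o\!\left(\binom{n}{3}\right).
\]
By Markov's inequality, with high probability $B \le \tfrac{1}{2}\binom{n}{3}$, so at least half of the triples $\tau$ are ``good'' in the sense that $\mathrm{Fill}_Y(\tau) \ge n^{2-2\alpha-\epsilon/2}$.

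Finally, keeping only the contribution from the good triples gives, with high probability,
\[
\sum_{\tau \in \binom{[n]}{3}} \bigl(\mathrm{Fill}_Y(\tau)\bigr)^2 \;\ge\; \tfrac{1}{2}\binom{n}{3}\bigl(n^{2-2\alpha-\epsilon/2}\bigr)^2 \;\ge\; \binom{n}{3}\bigl(n^{2-2\alpha-\epsilon}\bigr)^2,
\]
the last inequality holding for all sufficiently large $n$ since $n^{\epsilon} \ge 2$ eventually. There is no serious obstacle here; the only subtlety worth flagging is that one should \emph{not} try to union-bound Theorem \ref{thm:fill} directly over the $\binom{n}{3}$ triples (the $o(1)$ probability is not quantified sharply enough for that), and instead pass through the expectation of $B$ as above. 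The loss of a factor of $2$ is absorbed by shrinking $\epsilon$ to $\epsilon/2$ inside the application of Theorem \ref{thm:fill}.
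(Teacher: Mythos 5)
Your argument is correct and is exactly the ``standard probabilistic technique'' the paper is gesturing at (the paper itself gives no details, only a pointer to Proposition 4.2 of \cite{Dominic-distortion}): exploit $S_n$-invariance of $Y(n,p)$ so that each $\mathrm{Fill}_Y(\tau)$ is equidistributed with $\mathrm{Fill}_Y(123)$, apply Theorem \ref{thm:fill} at parameter $\epsilon/2$, control the expected number of bad triples by linearity, and pass through Markov rather than a union bound. Your parenthetical caution is also well taken --- Theorem \ref{thm:fill} only gives a $o(1)$ failure probability, which is not strong enough to union-bound over $\binom{n}{3}$ triples, so one must argue through $\expect[B]$ exactly as you do; and the factor $\tfrac{1}{2}$ loss is indeed absorbed by the slack $n^{\epsilon/2}$ gained in replacing $\epsilon$ by $\epsilon/2$. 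The only thing I would add for completeness is a remark that for $\tau$ with no filling (which can in principle happen when $\alpha$ is near $0$ and $H_1(Y)\neq 0$) one takes $\mathrm{Fill}_Y(\tau)=+\infty$, which only strengthens the inequality.
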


This leads immediately to an improvement of Theorem 1.2 in \cite{Dominic-distortion}:

\begin{thm}{\label{opt-dist}}  Let $Y(n,p)$ be as in Proposition \ref{Weiner-index}.  Let $\epsilon > 0$.  Then with high probability, for every map $\phi: Y \rightarrow \mathcal{H}$ from $Y$ to a Hilbert space $\mathcal{H}$ which is affine on each simplex,

$$ \max_{\tau \in \binom{n}{3}} \frac{ {\rm Fill}_Y (\tau)}{{\rm Fill}_\mathcal{H} (\phi \tau) } \cdot \max_{\tau \in \binom{n}{3}} \frac{{\rm Fill}_\mathcal{H} (\phi \tau) }{{\rm Fill}_Y (\tau) } \geq c n^{2- 2 \alpha - \epsilon} , $$

where ${\rm Fill}_\mathcal{H} (\phi \tau)$ refers to the area of the convex hull of $\phi (\tau)$.  In particular, by taking $\alpha > 0$ arbitrarily small, we see that for every $\delta > 0$, and for every large enough $n > C = C_\delta$, there exists a $2$-dimensional simplicial complex, $Y$ with complete $1$-skeleton, such that every map, $\phi : Y \to \mathcal{H}$, from $Y$ to a Hilbert space $\mathcal{H}$, which is affine on each simplex has that

$$ \max_{\tau \in \binom{n}{3}} \frac{ {\rm Fill}_Y (\tau)}{{\rm Fill}_\mathcal{H} (\phi \tau) } \cdot \max_{\tau \in \binom{n}{3}} \frac{{\rm Fill}_\mathcal{H} (\phi \tau) }{{\rm Fill}_Y (\tau) } \geq cn^{2-\delta}.$$

\end{thm}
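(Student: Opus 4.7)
The plan is to invoke the volume-distortion mechanism of Dotterrer's earlier paper \cite{Dominic-distortion}, using Proposition \ref{Weiner-index} as the sole new input. Indeed, the theorem is phrased as an improvement of Theorem 1.2 of \cite{Dominic-distortion}, and the improvement consists entirely in substituting the sharper Wiener-index lower bound of Proposition \ref{Weiner-index} for the weaker estimate used there; the structure of the argument is otherwise unchanged.

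Concretely, I would set $L := \max_\tau \frac{{\rm Fill}_Y(\tau)}{{\rm Fill}_{\mathcal{H}}(\phi\tau)}$ and $L' := \max_\tau \frac{{\rm Fill}_{\mathcal{H}}(\phi\tau)}{{\rm Fill}_Y(\tau)}$, and condition on the high-probability event of Proposition \ref{Weiner-index}, namely $\sum_\tau ({\rm Fill}_Y(\tau))^2 \ge \binom{n}{3}\, n^{2(2-2\alpha-\epsilon)}$. For any map $\phi$ that is affine on simplices, the pointwise comparison ${\rm Fill}_Y(\tau) \le L \cdot {\rm Fill}_{\mathcal{H}}(\phi\tau)$, combined with a Feige-style averaging argument in Hilbert space (essentially Proposition 4.2 of \cite{Dominic-distortion}, which upper-bounds the total squared Euclidean area of triangles from an $n$-point configuration in terms of $L'$ and Wiener-like invariants of $Y$), yields an inequality of the shape
$$(LL')^2 \;\ge\; c\,\frac{\sum_\tau ({\rm Fill}_Y(\tau))^2}{\binom{n}{3}}.$$
Substituting Proposition \ref{Weiner-index} and taking square roots then gives $LL' \ge c\, n^{2-2\alpha-\epsilon}$, which is the first assertion. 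The final assertion (distortion $\ge c\, n^{2-\delta}$ for any $\delta > 0$) follows from the first by choosing $\alpha, \epsilon > 0$ small enough that $2\alpha + \epsilon < \delta$.

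The main obstacle, and the one new bookkeeping step, is making sure that the Hilbert-space upper bound from \cite{Dominic-distortion} combines with Proposition \ref{Weiner-index} \emph{cleanly}, with no extra polynomial losses. In particular, the normalization built into Proposition 4.2 of \cite{Dominic-distortion} must interact with $L'$ (and not with the maximum or average filling area of $Y$, either of which would introduce the wrong power of $n$), so that the product $LL'$ really emerges. This is where the affine-on-simplices hypothesis, together with the Hilbert-space identity $\sum_{i<j}\|v_i - v_j\|^2 = n \sum_i \|v_i - \bar v\|^2$ (and its area analogue coming from Heron's formula) plays its essential role: it forces the typical triangle area in $\mathcal{H}$ to be controlled by a single scale, which is exactly what makes the comparison with the highly non-uniform filling areas in $Y$ produce a large distortion.
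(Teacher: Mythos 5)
Your proposal is correct and takes essentially the same route as the paper: the paper itself gives no self-contained argument for Theorem \ref{opt-dist}, presenting it as an immediate consequence of Proposition \ref{Weiner-index} plugged into the distortion machinery of \cite{Dominic-distortion} (Theorem 1.2 there), which is exactly what you do.
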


The left hand side of this inequality is referred to in \cite{Dominic-distortion} as the {\it filling distortion} of $\phi$ and is a natural homological analogue of much studied phenomenon of metric distortion (see \cite{LMN2002girth} for a survey).

The following is one way to think about this theorem. For any complex $Y$ with complete $1$-skeleton, we could embed that complex into Euclidean space by sending the $n$ vertices to the standard basis elements of $\R^n$. In that case, the convex hull of any triangle has area $O(1)$ and filling distortion is bounded from below by $\max_{\tau} {\rm Fill}_Y (\tau)$, which is at most $\vert Y^{(2)} \vert$.  If we choose $\alpha > 0$ small and positive, then with high probability $| Y^{(2)} | \sim n^{2 + 2 \alpha}$ and the distortion is at least around $n^{2 - 2 \alpha}$, which are very close to each other.  For such $Y$, this naive, ``equilateral'' embedding achieves essentially the best result.

The analogous statement in the context of graphs is the following: for expander graphs with $n$ vertices, every embedding into Euclidean requires $\Omega (\log n)$ metric distortion, and since the diameter of such an expander graph is at most $O(\log n)$, an embedding that sends the vertices to the standard basis of $\R^n$ achieves essentially the minimal possible distortion.

\bibliography{sysrefs}

\end{document}